\numberwithin{equation}{section}
\def\ZZ{{\mathds Z}}
\def\RR{{\mathds R}}
\def\NN{{\mathds N}}
\def\Stdepth{\operatorname{Stdepth}}
\def\Hdepth{\operatorname{Hdepth}}
\def\depth{\operatorname{depth}}
\def\min{\operatorname{min}}
\def\Ann{\operatorname{Ann}}
\let\Dirsum=\bigoplus
\let\iso=\cong
\newtheorem{lemma}{Lemma}[section]
\newtheorem{coro}[lemma]{Corollary}
\newtheorem{theo}[lemma]{Theorem}
\newtheorem{propo}[lemma]{Proposition}
\theoremstyle{definition}
\newtheorem{defi}[lemma]{Definition}
\newtheorem{rema}[lemma]{Remark}
\newtheorem{ex}[lemma]{Example}
\newtheorem{question}[lemma]{Question}
\newtheorem{prob}[lemma]{Problem}
\title{How to compute the multigraded Hilbert depth of a module}
\author{Bogdan Ichim}
\address{``Simion Stoilow'' Institute of Mathematics of the Romanian Academy, 010702 Bucharest, Romania}
\email{bogdan.ichim@imar.ro}
\author{Julio Jos\'e Moyano-Fern\'andez}
\address{Universit\"at Osnabr\"uck, Institut f\"ur Mathematik, 49069 Osnabr\"uck, Germany}
\email{jmoyanof@uos.de}
\begin{document}

\subjclass[2010]{Primary: 05E40; Secondary: 16W50.}
\keywords{Hilbert depth; Hilbert decomposition; Stanley depth; Stanley decomposition; partitions.}

\maketitle
\begin{abstract}
In the first part of this paper we introduce a method for computing Hilbert decompositions  (and consequently the Hilbert depth) of a finitely generated multigraded module $M$ over the polynomial ring $K[X_1, \ldots, X_n]$ by reducing the problem to the computation of the finite set of the new defined Hilbert partitions. In the second part we show how Hilbert partitions may be used for computing  the Stanley depth of the module $M$. In particular, we answer two open questions posed by Herzog in \cite{H}.
\end{abstract}

\section{Introduction}

In this paper we study methods for computing two algebraic-combinatorial invariants, namely the Hilbert depth and the Stanley depth of a finitely generated multigraded module $M$ over the standard multigraded polynomial ring $R=K[X_1,\dots,X_n]$. In particular, we give satisfactory answers to the following two open questions posed by Herzog in \cite{H}:

\begin{question}\label{Q:Herzog}\cite[Question 1.65]{H} Does there exist an algorithm to compute the Stanley
depth of finitely generated multigraded $R$-modules?
\end{question}

\begin{prob}\label{P:Herzog}\cite[Problem 1.66]{H} Find an algorithm to compute the Stanley depth for finitely
generated multigraded $R$-modules $M$ with $\dim_K M_a \le 1$ for all $a\in \ZZ^n$.
\end{prob}

In recent years, \emph{Stanley decompositions} of multigraded modules
over $R$ have been discussed
intensively. Such decompositions, introduced by Stanley in \cite{S},
break the module $M$ into a direct sum of \emph{Stanley spaces},
each being of type $mS$ where $m$ is a homogeneous element of $M$,
$S=K[X_{i_1},\dots,X_{i_d}]$ is a polynomial subalgebra of $R$ and $S\bigcap \Ann m=0$. One says that $M$ has \emph{Stanley depth} $s$, $\Stdepth M=s$,  if
one can find a Stanley decomposition in which $d\ge s$ for each
polynomial subalgebra involved, but none with $s$ replaced by $s+1$.
\medskip

The computation of the Stanley depth is not an easy task, due mainly to its combinatorial  nature. A first step was done by Herzog, Vladoiu and Zheng in \cite{HVZ}, where they introduced a method for computing the Stanley depth of a monomial ideal of $R$. Some remarkable results in the study of the Stanley depth in the multigraded case were presented by Apel (see \cite{A1}, \cite{A2}), Herzog et al.\
(see \cite{HP}, \cite{HSY}) and Popescu et al.\ (see \cite{AP}, \cite{P}).
\medskip

Hilbert series are the most important numerical invariants of finitely generated graded and multigraded modules over $R$ and they form the bridge from commutative algebra to its combinatorial applications (we refer here to classical results of Hilbert, Serre, Ehrhart and Stanley). A new type of decompositions for multigraded modules $M$ depending only on the Hilbert series of $M$ was introduced by Bruns, Krattenthaler and Uliczka in \cite{BKU}: the \emph{Hilbert decompositions}. They are a weaker type of decompositions not requiring
the summands to be submodules of $M$, but only vector subspaces isomorphic to polynomial subrings. The notion of \emph{Hilbert depth} $\Hdepth M$ is defined accordingly. Several results concerning both the graded and multigraded cases were presented in Bruns, Krattenthaler and Uliczka \cite{BKU2}, \cite{U} and Uliczka and the second author \cite{JU}. All of them are based on both combinatorial and algebraic  techniques.
\medskip

The contain of the paper is organized as follows. Section \ref{StanHilb} is devoted to introduce the definitions and main tools concerning Hilbert depth to be used along the paper.
In Section \ref{AlgHilb}, a procedure for computing the Hilbert depth is presented. Remark that the method described in \cite{HVZ} may be used to compute the Hilbert depth in the particular case of a monomial ideal of $R$ (notice that---according to \cite{BKU}---the Hilbert depth coincides with the Stanley depth in that case). By introducing the concept of \emph{Hilbert partition} (cf. Definition \ref{defi:Hpartition}) and using the functorial techniques exposed by E. Miller in \cite{M}, we extend it to a method for computing the Hilbert depth in the general case of a multigraded $R$-module (see Theorem \ref{theo:main} and Corollary \ref{cor:hdepth}).
\medskip

Hilbert decompositions are intimately related to Stanley decompositions: All Stanley decompositions are Hilbert decompositions. In the rest of the paper, we investigate how strong connected are the Hilbert depth and the Stanley depth.
\medskip

In Section \ref{AlgStanley} we present an approach to the problem of computing Stanley depth of a finitely generated multigraded module $M$ over the polynomial ring $R$ based on Section \ref{AlgHilb}. We show that in a \emph{finite} number of steps one can decide whether a Hilbert partition (together with a finite set of elements of $M$) is inducing a Stanley decomposition or not (the converse is always true: any Stanley decomposition induces a Hilbert partition), see Proposition \ref{hilbert:stanley}.

We conclude that there exists an method (although not easy to use) to compute the Stanley depth by looking at all the Hilbert partitions and selecting those that are also inducing Stanley decompositions in Corollary \ref{coro:last1}. Thus, the answer to Question \ref{Q:Herzog} is ``Yes". Moreover, with the assumption that $\dim_K M_a \le 1$ for all $a\in \ZZ^n$, checking whether Hilbert decompositions are inducing Stanley decompositions is easy and we introduce a precise algorithm for computing the Stanley depth in that case in Corollary \ref{coro:last2}. This solves Problem \ref{P:Herzog}.
\medskip

In the last section we show that the methods introduced in the Sections \ref{AlgHilb} and \ref{AlgStanley} can effectively be applied in order to deduce some simple statements (which to the best of our knowledge
are not known to have a proof by using other methods).

\section{Prerequisites}\label{StanHilb}

Throughout the paper we will use the notation $a=(a_1, \ldots, a_n)$ for elements $a \in \mathds{Z}^n$ (or $\mathds{N}^n$). We consider the polynomial ring $R=K[X_1,\dots,X_n]$ over a field
$K$ with the  \emph{multigraded} structure on $R$, namely the $\ZZ^n$-grading
in which the degree of $X_i$ is the $i$-th vector $e_i$ of the
canonical basis of $\RR^n$. For any $c \in \mathds{N}^n$ we will denote $X^{c}:=X_1^{c_1} X_2^{c_2} \cdots X_n^{c_n}$. All $R$-modules we consider are assumed to belong to the category $\mathcal{M}$ of finitely generated $\ZZ^n$-graded (or multigraded) $R$-modules. All the isomorphisms occurring in the paper are in the category of $\ZZ^n$-graded vector spaces. If we need to consider an isomorphism of $R$-modules, we will mention it explicitly.
\medskip

Hilbert functions are one of the most important numerical invariants of graded and multigraded modules; they form the bridge from commutative algebra to its combinatorial applications.  Let $M=\bigoplus_{a \in \mathds{Z}^n} M_a \in \mathcal{M}$. Then we consider its \emph{Hilbert function}
\[
H(M,-) : \ZZ^n \longrightarrow \ZZ, \quad H(M,a)=\dim_K M_a.
\]
For further details about Hilbert functions in the multigraded case the reader is referred to Bruns and Gubeladze \cite{BG}.
\medskip

From the combinatorial viewpoint a module is often only an algebraic
substrate of its Hilbert function, and one may ask which
presentation a given Hilbert function can have. Following \cite{BKU} we define the main objects of our study, namely Hilbert decompositions and Hilbert depth of modules.

\begin{defi}
A \emph{Hilbert
decomposition of} $M$ is a finite family
\[
\mathfrak{D} : (R_i,s_i)_{i\in I}
\]
such that $R_i$ are subalgebras generated by a subset of the indeterminates of
$R$ for each $i\in I$, $s_i\in \ZZ^n$, and
\[
M\iso\Dirsum_{i\in I} R_i(-s_i)
\]
as a multigraded $K$-vector space.
\end{defi}

Observe that all the Hilbert decompositions of a module $M$ depend only on
the Hilbert function of $M$.

\begin{defi}
A Hilbert decomposition carries the structure of an $R$-module and has a well-defined depth, which is called the \emph{depth of the Hilbert decomposition} $\mathfrak{D}$ and will be denoted by $\depth \mathfrak{D}$.
The \emph{Hilbert depth} of a module $M$ is
\[
\max\{\depth \mathfrak{D} \ | \ \mathfrak{D} \text{ is a Hilbert decomposition of } M\}
\]
 and will be denoted by $\Hdepth M$.
\end{defi}

Next, we shall consider a natural partial order on $\ZZ^n$ as follows: Given $a,b\in \ZZ^n$, we say that $a \preceq b$ if and only if $a_i\le b_i$ for $i=1,\ldots,n$. Note that $\ZZ^n$ with this partial order is a distributive lattice with meet $a\wedge b$ and join $a\vee b$ being the componentwise minimum and maximum, respectively.
We set the interval between $a$ and $b$ to be
\[
[a,b]:=\{c \in \ZZ^n \ |\  a \preceq c \preceq b \}.
\]

We recall some definitions and results given by E. Miller in \cite{M} which will be useful in the sequel.
Let $g\in \mathds{N}^n$. The module $M$ is said to be $\mathds{N}^n$-graded if $M_a=0$ for $a \notin \mathds{N}^n$; $M$ is said to be positively $g$-determined if it is $\mathds{N}^n$-graded and the multiplication map $\cdot X_i : M_{a} \longrightarrow M_{a+e_i}$ is an isomorphism whenever $a_i \ge g_i$.
A characterization of positively $g$-determined modules is given by the following.

\begin{propo}\label{prop:ezra}\cite[Proposition 2.5]{M}
The module $M \in \mathcal{M}$ is positively $g$-determined if and only if the multigraded Betti numbers of $M$ satisfy $\beta_{0,a}=\beta_{1,a}=0$ unless $0 \preceq a \preceq g$.
\end{propo}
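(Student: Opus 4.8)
The plan is to prove the two implications separately, in each case working with a minimal graded free presentation of $M$ and tracking what multiplication by an indeterminate $X_i$ does on individual graded components. For the implication ``if the Betti condition holds, then $M$ is positively $g$-determined'', I would first note that $\beta_{0,a}=0$ for $a\not\succeq 0$ forces $M$ to be generated in degrees lying in $\NN^n$, hence $M$ is $\NN^n$-graded. Next take a minimal free presentation $F_1\to F_0\to M\to 0$; since the ranks of the free modules in a minimal resolution are precisely the Betti numbers, the hypothesis says that all free generators of $F_0$ and of $F_1$ sit in degrees $b$ with $0\preceq b\preceq g$. The computational heart is the elementary observation that for a shifted free module $R(-b)$ the map $\cdot X_i\colon R(-b)_a\to R(-b)_{a+e_i}$ is an isomorphism whenever $a_i\ge b_i$; one checks this by splitting into the case $a-b\succeq 0$ (both pieces are one-dimensional and $X_i$ acts invertibly) and the case $(a-b)_i\ge 0$ but $a-b\not\succeq 0$ (both pieces vanish). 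Therefore, when $a_i\ge g_i$, multiplication by $X_i$ is an isomorphism $(F_1)_a\to(F_1)_{a+e_i}$ and $(F_0)_a\to(F_0)_{a+e_i}$, and taking cokernels in the commutative diagram whose rows are the presentation in degrees $a$ and $a+e_i$ shows that $\cdot X_i\colon M_a\to M_{a+e_i}$ is an isomorphism as well. Hence $M$ is positively $g$-determined.

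For the converse, assume $M$ is positively $g$-determined. Since $M$ is $\NN^n$-graded, $\beta_{0,a}=\dim_K(M/\mm M)_a$ and $\beta_{1,a}=\dim_K\operatorname{Tor}_1^R(K,M)_a$ both vanish for $a\not\succeq 0$, so it suffices to treat degrees $a\succeq 0$ with $a_i>g_i$ for some $i$. For the $\beta_0$ part: as $(a-e_i)_i=a_i-1\ge g_i$, the hypothesis makes $\cdot X_i\colon M_{a-e_i}\to M_a$ surjective, so $M_a\subseteq(\mm M)_a$ and $\beta_{0,a}=0$. Using this, choose a minimal presentation with $F_0=\Dirsum_{0\preceq b\preceq g}R(-b)^{\beta_{0,b}}$ and set $N=\ker(F_0\to M)$, so that by minimality $\beta_{1,a}=\dim_K(N/\mm N)_a$. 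Applying multiplication by $X_i$ to the short exact sequence $0\to N\to F_0\to M\to 0$ in degrees $a-e_i$ and $a$, the middle vertical map is an isomorphism (every generator $b$ of $F_0$ has $b_i\le g_i\le a_i-1$, so we are in the range covered by the observation above) and the right-hand vertical map is an isomorphism by hypothesis; the five lemma then forces $\cdot X_i\colon N_{a-e_i}\to N_a$ to be an isomorphism, in particular surjective, so $N_a\subseteq(\mm N)_a$ and $\beta_{1,a}=0$.

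I expect the $\beta_1$ half of the converse to be the only genuinely delicate point: unlike the $\beta_0$ statement, it cannot be read off directly from the multiplication maps on $M$ itself, and one has to first settle the $\beta_0$ statement in order to know that a minimal surjection $F_0\to M$ exists with all generators in the box $[0,g]$ --- only then does the first syzygy module $N$ inherit enough invertibility of the $X_i$-multiplications (via the five lemma) to conclude. Everything else --- existence of minimal graded free resolutions, the identifications $\beta_{0,a}=\dim_K(M/\mm M)_a$ and $\beta_{1,a}=\dim_K(N/\mm N)_a$, and the short case analysis for the shifted free modules $R(-b)$ --- is routine bookkeeping.
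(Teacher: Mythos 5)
The paper does not prove this proposition; it is quoted verbatim from Miller's paper \cite{M} with no argument supplied, so there is no ``paper's own proof'' to compare against. Your proposal is, however, a correct and complete proof, and it follows the natural route one would take (and, as far as I can tell, essentially Miller's route): reduce to the elementary case analysis for $\cdot X_i$ on a shifted free module $R(-b)$, then propagate through a minimal presentation by a cokernel/five-lemma diagram chase in one direction, and identify $\beta_{0,a}=\dim_K(M/\mm M)_a$, $\beta_{1,a}=\dim_K(N/\mm N)_a$ and exploit surjectivity of $\cdot X_i$ in the other. A couple of small points worth making explicit when you write this up cleanly: in the ``if'' direction you should note that the maps $F_1\to F_0\to M$ are $R$-linear, so the degree-$a$ and degree-$(a+e_i)$ rows do form a commutative ladder under $\cdot X_i$; and in the converse you can bypass the Koszul-complex argument for $\beta_{1,a}=0$ when $a\not\succeq 0$ simply by observing that $N\subseteq F_0$ is $\NN^n$-graded because $F_0$ is, so $(N/\mm N)_a=0$ there as well. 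Neither is a gap, just tightening.
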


Important tools also introduced in \cite[Definition 2.7]{M} are the following functors:
\begin{itemize}
\item[(1)] the subquotient bounded in the interval $[0,g]$, denoted by $\mathcal{B}_{g}$, where
\[
\mathcal{B}_{g}(M):= \bigoplus_{0 \preceq a \preceq g} M_{a};
\]
\item[(2)] the positive extension of $M$, denoted by $\mathcal{P}_{g}$, where
\[
\mathcal{P}_{g}(M):=\bigoplus_{a \in \mathds{Z}^n}M_{g \wedge a}
\]
or, in other words, $(\mathcal{P}_{g}M)_{a}=M_{g \wedge a}$, endowed with the $R$-action
\[
(\cdot X_i)_g: (\mathcal{P}_{g}M)_{a} \to (\mathcal{P}_{g}M)_{a+e_i}
\]
defined as the multiplication map $\cdot X_i: M_{g \wedge a} \to M_{g \wedge a+e_i}$ if $a_i < g_i$; or as the identity map otherwise.
\end{itemize}

\medskip

From the above definitions one can immediately obtain:

\begin{propo}\label{prop:1}\cite[Theorem 2.11]{M} Let $g\in \mathds{N}^n$, and assume that $M \in \mathcal{M}$ is positively $g$-determined. Then
$$
\mathcal{P}_{g}(\mathcal{B}_{g}(M))\iso M.
$$
\end{propo}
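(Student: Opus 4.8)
The plan is to verify the isomorphism degree by degree. Fix $a \in \mathds{Z}^n$; we must show $(\mathcal{P}_{g}(\mathcal{B}_{g}(M)))_{a} \iso M_{a}$ as $K$-vector spaces, and moreover that these isomorphisms are compatible with the $R$-action so as to give an isomorphism of multigraded $R$-modules. By definition, $(\mathcal{P}_{g}(\mathcal{B}_{g}(M)))_{a} = (\mathcal{B}_{g}(M))_{g \wedge a}$. Now $g \wedge a \preceq g$, and $g \wedge a \in \mathds{N}^n$ precisely when $a \in \mathds{N}^n$ (using $g \in \mathds{N}^n$); since $M$ is $\mathds{N}^n$-graded, $M_{a} = 0$ whenever $a \notin \mathds{N}^n$, and likewise $(\mathcal{B}_{g}(M))_{g \wedge a} = M_{g \wedge a} = 0$ in that case, so both sides vanish. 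Hence we may assume $a \in \mathds{N}^n$, in which case $0 \preceq g \wedge a \preceq g$ and therefore $(\mathcal{B}_{g}(M))_{g \wedge a} = M_{g \wedge a}$.

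It remains to produce a natural isomorphism $M_{g \wedge a} \iso M_{a}$ for $a \in \mathds{N}^n$. First I would treat the case $a \preceq g$, where $g \wedge a = a$ and there is nothing to prove. For general $a \in \mathds{N}^n$, write $a = (g \wedge a) + \sum_{i : a_i > g_i}(a_i - g_i)e_i$ and climb up one coordinate at a time: for each index $i$ with $a_i > g_i$, and each intermediate degree $b$ reached along the way with $b_i \ge g_i$, the positive $g$-determinacy hypothesis says the multiplication map $\cdot X_i : M_{b} \to M_{b + e_i}$ is an isomorphism. Composing these finitely many isomorphisms yields a $K$-linear isomorphism $M_{g \wedge a} \xrightarrow{\ \sim\ } M_{a}$. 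One checks this composite is independent of the order in which the coordinates are raised, because the multiplication maps $\cdot X_i$ and $\cdot X_j$ commute on $M$; this gives a well-defined isomorphism $\varphi_{a}$ for every $a$.

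The main point is then to verify that the family $(\varphi_{a})_{a \in \mathds{Z}^n}$ intertwines the twisted action $(\cdot X_i)_g$ on $\mathcal{P}_{g}(\mathcal{B}_{g}(M))$ with the ordinary action $\cdot X_i$ on $M$, i.e. that the square with horizontal maps $\varphi_{a}$, $\varphi_{a+e_i}$ and vertical maps $(\cdot X_i)_g$, $\cdot X_i$ commutes. There are two cases from the definition of $(\cdot X_i)_g$. If $a_i < g_i$, then $(\cdot X_i)_g$ is literally $\cdot X_i : M_{g \wedge a} \to M_{g \wedge a + e_i}$, and since raising the $i$-th coordinate commutes with the other multiplications used to build $\varphi$, the square commutes. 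If $a_i \ge g_i$, then $(\cdot X_i)_g$ is the identity on $(\mathcal{B}_{g}(M))_{g \wedge a} = M_{g \wedge a}$ (note $(g \wedge a)_i = g_i = (g \wedge (a + e_i))_i$, so the source and target agree), while on the $M$ side $\varphi_{a+e_i} = (\cdot X_i) \circ \varphi_{a}$ by the very construction of $\varphi$ in the previous paragraph; so again the square commutes. This establishes the claimed isomorphism of multigraded $R$-modules.

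\textbf{Anticipated obstacle.} The only genuinely delicate part is the bookkeeping in the last paragraph: making sure that the identity-versus-multiplication dichotomy in the definition of $(\cdot X_i)_g$ matches exactly the "one-step-up" factorization of $\varphi$, and that the well-definedness of $\varphi$ (independence of the path) is used consistently. Everything else — the vanishing outside $\mathds{N}^n$, the reduction to $a \in \mathds{N}^n$, the use of Proposition \ref{prop:ezra} via the definition of positively $g$-determined — is routine.
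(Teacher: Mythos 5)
Your proposal is correct, and since the paper itself states this result without proof (it is cited directly as \cite[Theorem 2.11]{M}, with the preceding remark that it follows ``from the above definitions''), your direct degree-by-degree verification is exactly the routine check the authors have in mind. The only step worth making fully explicit, which you do, is the case split in the $R$-linearity check matching the identity-versus-multiplication dichotomy in the definition of $(\cdot X_i)_g$ against the factorization of the isomorphism $\varphi_{a+e_i}=(\cdot X_i)\circ\varphi_a$; everything else is the expected bookkeeping.
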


The following example makes clear the behaviour of the functors $\mathcal{B}_{g}$ and $\mathcal{P}_{g}$.

\begin{ex}\label{ex:1}\cite[Example 2.8]{M} Let $a \in \mathds{N}^n$. We have $\mathcal{B}_g(R(-a))=0$ unless $a \preceq g$, in which case we have that
\[
\mathcal{B}_g(R(-a)) \cong (R/\langle X_1^{g_1+1-a_1}, \ldots , X_n^{g_n+1-a_n} \rangle)(-a)
\]
is the artinian subquotient of $R$ which is nonzero precisely in the degrees from the interval $[a,g]$. Applying $\mathcal{P}_g$ to this yields back $R(-a)$ so that
$\mathcal{P}_{g} (\mathcal{B}_{g}(R(-a)))$ is isomorphic to $R(-a)$ if $a \preceq g$.
\end{ex}

Figure \ref{fig1}, in which the circles represent the graded components of $\mathcal{B}_{g}(M)$, illustrates the action of the functor $\mathcal{P}_{g}$.

\begin{figure}[h]\label{fig:one}
\begin{center}
  \begin{tikzpicture}[scale=0.6]
    \draw[thick] (0,0) -- (4,4) -- (12,4);
    \draw[thick] (4,4) -- (4,13);
    \draw[thick] (2,2) -- (6,2) -- (8,4);
    \draw[thick] (2,8) -- (6,8) node[sloped,below, pos=1.1] {$g$} -- (8,10) -- (4,10) -- (2,8);
    \draw[thick] (6,2) -- (6,8);
    \draw[thick] (2,2) -- (2,8);
    \draw[thick] (8,10) -- (8,4);

    \draw[dashed] (6,5) -- (10,5);
    \draw[dashed] (6,5) -- (2,1);

    \draw[dashed] (7,6) -- (11,6);
    \draw[dashed] (4.5,5) -- (0.5,1);

    \draw[dashed] (6,8) -- (10,8);
    \draw[dashed] (6,8) -- (2,4);
    \draw[dashed] (6,8) -- (6,12);

   \draw[dashed] (4.5,8) -- (4.5,12);
   \draw[dashed] (4.5,8) -- (0.5,4);
   \draw[dashed] (5.5,9) -- (5.5,13);

     \draw[dashed] (7,9) -- (11,9);
     \draw[dashed] (7,9) -- (7,13);

    \draw[dotted] (2,5) -- (6,5) -- (8,7) -- (4,7) -- (2,5);

    \draw(7,6) circle(0.2);
    \draw(6,5) circle(0.2);
    \draw[thick](6,8) circle(0.2);
    \draw(4.5,5) circle(0.2);
    \draw(4.5,8) circle(0.2);
    \draw(7,9) circle(0.2);
     \draw(5.5,6) circle(0.2);
    \draw(5.5,9) circle(0.2);
   \end{tikzpicture}
\caption{The functor $\mathcal{P}_{g}$} \label{fig1}
\end{center}
\end{figure}

\section{A method for computing the multigraded Hilbert depth of a module}\label{AlgHilb}

The aim of this section is to describe a procedure for computing the Hilbert depth of a multigraded module over the polynomial ring.
Let $M$ denote a finitely generated  multigraded $R$-module with a minimal multigraded free presentation
\[
\bigoplus_{a\in \ZZ^n} R(-a)^{\beta_{1,a}} \longrightarrow \bigoplus_{a\in \ZZ^n} R(-a)^{\beta_{0,a}} \longrightarrow M \longrightarrow 0,
\]
and assume for simplicity, and without loss of generality, that all $\beta_{0,a}=0$  (and \emph{a fortiori}
all $\beta_{1,a}=0$) if $a \notin \NN^n$.
\medskip

We shall also consider the \emph{Hilbert series} of $M$, that is

\[
H_M(X)=\sum_{a \in \mathds{N}^n} H(M,a)X^a.
\]

Let $g\in \NN^n$ be such that the multigraded Betti numbers of $M$ satisfy the equalities $\beta_{0,a}=\beta_{1,a}=0$ unless $0\preceq a \preceq g$. Then, according to Proposition \ref{prop:ezra}, the module $M$ is positively $g$-determined. The Hilbert series of $M$ can be recovered from the polynomial
\[
H_M(X)_{\preceq g}:= H_{\mathcal{B}_g(M)}(X)=\sum_{\substack{0\preceq  a \preceq g}} H(M,a)X^a
\]
since, according to Proposition \ref{prop:1}, we have that $\mathcal{P}_{g}(\mathcal{B}_{g}(M))=M$. This fact may be used in order to actually compute the Hilbert depth of $M$, as presented in this section.
\medskip

Given $a,b \in \ZZ^n$ such that $a\preceq b$, we set
\[
Q[a,b](X):=\sum_{a \preceq c \preceq b } X^{c}
\]
and call it the \emph{polynomial induced by the interval} $[a,b]$.

\begin{defi}\label{defi:Hpartition}
We define a \emph{Hilbert partition}  of the polynomial $H_M(X)_{\preceq g}$ to be an expression
\[
\mathfrak{P}: H_M(X)_{\preceq g}=\sum_{i \in I_{\mathfrak{P}}} Q[a^i,b^i](X)
\]
as a finite sum of polynomials induced by the intervals $[a^i,b^i]$ (the notation $I_{\mathfrak{P}}$ makes clear the dependency on
$\mathfrak{P}$ and so the finiteness).
\end{defi}

In order to describe the Hilbert decomposition of $M$ induced by the Hilbert partition $\mathfrak{P}$  of $H_M(X)_{\preceq g}$, we introduce the following notations. For  $a\preceq g$ we set $Z_{a}=\{X_j \ |\  a_j=g_j\}$. Moreover we denote by $K[Z_{a}]$ the subalgebra generated by the subset of the indeterminates $Z_{a}$. We also define the map
\[
\rho:\{0\preceq a\preceq g\} \longrightarrow \NN, \quad \rho (a):=|Z_a|,
\]
and for $0 \preceq a \preceq b\preceq g$ we set
\[
\mathcal{G}[a,b]:=\{c\in [a,b] \mid c_j=a_j \text{ for all } j \in \mathds{N} \text{ with } b_j=g_j\}.
\]

\bigskip
\begin{lemma}\label{lemma:interval}Let $0 \preceq a \preceq b\preceq g$. Set $K[a,b]=\mathcal{B}_b(R(-a))$.
Then $$\mathcal{P}_g(K[a,b])=\bigoplus_{c\in \mathcal{G}[a,b]} K[Z_{b}](-c)$$
is a Hilbert decomposition of $\mathcal{P}_g(K[a,b])$.
\end{lemma}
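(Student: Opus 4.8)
The plan is to trace the definitions through the functors $\mathcal{B}$ and $\mathcal{P}$ and reduce everything to the behaviour on a single shifted polynomial ring, where we can invoke Example~\ref{ex:1}. First I would unwind $K[a,b]=\mathcal{B}_b(R(-a))$: by Example~\ref{ex:1}, this is $0$ unless $a\preceq b$ (which we are assuming), and otherwise it is the artinian subquotient of $R(-a)$ supported exactly on the interval $[a,b]$, namely $(R/\langle X_1^{b_1+1-a_1},\dots,X_n^{b_n+1-a_n}\rangle)(-a)$. So as a multigraded vector space $K[a,b]=\bigoplus_{a\preceq c\preceq b}K(-c)$, i.e.\ one copy of the field in each degree $c$ of the box $[a,b]$.

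Next I would compute $\mathcal{P}_g$ of this. By definition $(\mathcal{P}_gN)_c=N_{g\wedge c}$, so $(\mathcal{P}_gK[a,b])_c = K[a,b]_{g\wedge c}$, which is nonzero (and then one-dimensional) exactly when $a\preceq g\wedge c\preceq b$. The key combinatorial step is to identify which $c\in\ZZ^n$ satisfy this, and to see that the resulting graded vector space decomposes as the asserted direct sum $\bigoplus_{c\in\mathcal{G}[a,b]}K[Z_b](-c)$. The idea is: for an index $j$ with $b_j=g_j$ (so $X_j\in Z_b$), the condition $a_j\le (g\wedge c)_j\le b_j=g_j$ forces $(g\wedge c)_j\ge a_j$, which holds iff $c_j\ge a_j$, and imposes no upper bound on $c_j$ — this is exactly the free-variable direction contributed by $K[Z_b]$. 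For an index $j$ with $b_j<g_j$, the condition $a_j\le (g\wedge c)_j\le b_j<g_j$ forces $(g\wedge c)_j=c_j$ (since $c_j<g_j$ whenever $c_j\le b_j<g_j$) and hence $a_j\le c_j\le b_j$; this pins $c_j$ to the finite box coordinate. So the set of $c$ with $(\mathcal{P}_gK[a,b])_c\ne 0$ is precisely $\{c : a_j\le c_j\le b_j$ for $j$ with $b_j<g_j$, and $c_j\ge a_j$ for $j$ with $b_j=g_j\}$. Writing each such $c$ uniquely as $c=c'+\sum_{j:b_j=g_j}(c_j-a_j)e_j$ with $c'\in\mathcal{G}[a,b]$ (i.e.\ $c'$ agreeing with $a$ in the $Z_b$-coordinates and ranging over $[a,b]$ in the others), the monomials $X^{c}$ with fixed $c'$ are exactly $X^{c'}\cdot(\text{monomials in }K[Z_b])$. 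This gives the vector-space isomorphism $\mathcal{P}_gK[a,b]\iso\bigoplus_{c'\in\mathcal{G}[a,b]}K[Z_b](-c')$.

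Finally I would check that this isomorphism respects the $R$-action defining $\mathcal{P}_g$, so that the right-hand side really is a \emph{Hilbert decomposition} (in the sense of the first definition) of $\mathcal{P}_gK[a,b]$: each summand $K[Z_b](-c')$ is a shift of a subalgebra generated by a subset of the indeterminates, and the multiplication maps $(\cdot X_i)_g$ match up — for $X_i\in Z_b$ they act as honest multiplication inside $K[Z_b]$ (since $(\mathcal{P}_gK[a,b])_c\to(\mathcal{P}_gK[a,b])_{c+e_i}$ is the identity on $M_{g\wedge c}=M_{g\wedge(c+e_i)}$ once $c_i\ge a_i$... rather, one checks it is an isomorphism for all relevant $c$), and for $X_i\notin Z_b$ it maps between different $\mathcal{G}[a,b]$-summands or kills the component, exactly as the zero/shift structure on the direct sum prescribes.

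I expect the main obstacle to be the bookkeeping in the second paragraph: cleanly separating the two types of coordinates ($b_j=g_j$ versus $b_j<g_j$), verifying the case analysis on $(g\wedge c)_j$, and showing the reindexing $c\mapsto(c',c-c')$ is a bijection onto $\mathcal{G}[a,b]\times(\text{monomials of }K[Z_b])$. The module-structure compatibility at the end is routine once the underlying index sets are correctly matched, since it is already built into the definition of the positive extension functor $\mathcal{P}_g$ and Example~\ref{ex:1}.
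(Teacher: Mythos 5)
Your proof is correct, but it takes a more hands-on route than the paper. The paper's argument is a two-line functorial shortcut: observe that
\[
\mathcal{B}_b(R(-a))\;=\;\mathcal{B}_b\Bigl(\bigoplus_{c\in\mathcal{G}[a,b]}K[Z_b](-c)\Bigr)\;=\;\mathcal{B}_g\Bigl(\bigoplus_{c\in\mathcal{G}[a,b]}K[Z_b](-c)\Bigr),
\]
the second equality holding because in every coordinate $j$ with $X_j\in Z_b$ we have $b_j=g_j$, while in the remaining coordinates the direct sum is supported below $b_j$; then apply $\mathcal{P}_g$ and invoke Proposition~\ref{prop:1} (noting that $\bigoplus_{c\in\mathcal{G}[a,b]}K[Z_b](-c)$ is positively $g$-determined). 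You instead compute $(\mathcal{P}_gK[a,b])_c=K[a,b]_{g\wedge c}$ degree by degree, determine the support by the case split on $b_j=g_j$ versus $b_j<g_j$, and re-index the resulting set of degrees by $\mathcal{G}[a,b]\times(\text{monomials of }K[Z_b])$. Your case analysis is accurate and the reindexing bijection is the right one, so the argument closes. What the paper's route buys is brevity and the avoidance of the final $R$-action discussion: it produces the identification at the level of $K$-linear functors, so no bookkeeping is needed. What your route buys is transparency — one literally sees which multidegrees survive — and independence from Proposition~\ref{prop:1}; note also that your closing paragraph about the $R$-action is unnecessary overhead, since a Hilbert decomposition is by definition only an isomorphism of multigraded $K$-vector spaces, which you have already established in the second paragraph.
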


\begin{proof} We have
\[
\mathcal{B}_b(R(-a))= \mathcal{B}_b(\bigoplus_{c\in \mathcal{G}[a,b]} K[Z_{b}](-c))=\mathcal{B}_g(\bigoplus_{c\in \mathcal{G}[a,b]} K[Z_{b}](-c)).
\]
Since $\mathcal{B}_g$ and $\mathcal{P}_g$ are $K$-linear functors, the conclusion follows from Proposition \ref{prop:1} by applying $\mathcal{P}_g$.
\end{proof}

We can now state the main theorem of this paper.

\begin{theo} \label{theo:main} The following statements hold:
\begin{enumerate}
\item Let $\mathfrak{P}:H_M(X)_{\preceq g}=\sum_{i=1}^r Q[a^i, b^i](X)$ be a Hilbert partition of $H_M(X)_{\preceq g}$. Then
\[
\mathfrak{D}(\mathfrak{P}): M\iso \bigoplus_{i=1}^r\Big(\bigoplus_{c\in \mathcal{G}[a^i,b^i]} K[Z_{b^i}](-c)\Big)  \hspace{8.3cm}[\star]
\]
is a Hilbert decomposition of $M$. Moreover,
\[
\Hdepth \mathfrak{D}(\mathfrak{P})=\min\{\rho(b^i):\  i=1,\ldots,r\}.
\]
\item Let $\mathfrak{D}$ be a Hilbert decomposition of $M$. Then there exists a Hilbert partition $\mathfrak{P}$ of $H_M(X)_{\preceq g}$ such that
\[
\Hdepth \mathfrak{D}(\mathfrak{P})\ge \Hdepth \mathfrak{D}.
\]
In particular, $\Hdepth M$ can be computed as the maximum of the numbers $\Hdepth \mathfrak{D}(\mathfrak{P})$, where $\mathfrak{P}$ runs over the finitely many Hilbert partitions of $H_M(X)_{\preceq g}$.
\end{enumerate}
\end{theo}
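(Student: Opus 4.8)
The plan is to prove part (1) first and then deduce part (2) from it together with Lemma~\ref{lemma:interval} and Proposition~\ref{prop:1}.

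\medskip
\textbf{Proof of (1).} Start from the given Hilbert partition $\mathfrak{P}:H_M(X)_{\preceq g}=\sum_{i=1}^r Q[a^i,b^i](X)$. The key translation is between polynomials with nonnegative integer coefficients supported in $[0,g]$ and isomorphism classes of finite-dimensional $\ZZ^n$-graded vector spaces supported in $[0,g]$: the polynomial $Q[a^i,b^i](X)=\sum_{a^i\preceq c\preceq b^i}X^c$ is exactly the Hilbert series of $\mathcal{B}_{b^i}(R(-a^i))=K[a^i,b^i]$ restricted to $[0,g]$ (here one should check $b^i\preceq g$, which holds since the monomials $X^c$ appearing lie in $[0,g]$, forcing $b^i\preceq g$). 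Hence the equality of polynomials $\mathfrak{P}$ is precisely the statement that
\[
\mathcal{B}_g(M)\iso\bigoplus_{i=1}^r \mathcal{B}_{b^i}(R(-a^i))=\bigoplus_{i=1}^r K[a^i,b^i]
\]
as $\ZZ^n$-graded $K$-vector spaces. Now apply the exact, $K$-linear functor $\mathcal{P}_g$ to both sides. On the left, Proposition~\ref{prop:1} gives $\mathcal{P}_g(\mathcal{B}_g(M))\iso M$ (using that $M$ is positively $g$-determined). On the right, $\mathcal{P}_g$ commutes with finite direct sums, so $\mathcal{P}_g(\bigoplus_i K[a^i,b^i])\iso\bigoplus_i\mathcal{P}_g(K[a^i,b^i])$, and Lemma~\ref{lemma:interval} rewrites each summand as $\bigoplus_{c\in\mathcal{G}[a^i,b^i]}K[Z_{b^i}](-c)$. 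Combining these yields exactly the decomposition $[\star]$, and since every summand $K[Z_{b^i}](-c)$ is a polynomial subalgebra generated by a subset of the indeterminates shifted by an element of $\ZZ^n$, this is a genuine Hilbert decomposition of $M$. For the depth formula, recall that the depth of a direct sum of shifted polynomial subalgebras is the minimum of the numbers of variables, i.e. $\depth\bigl(K[Z_{b^i}](-c)\bigr)=|Z_{b^i}|=\rho(b^i)$; taking the minimum over all summands and noting that for fixed $i$ the index $c$ ranges over the nonempty set $\mathcal{G}[a^i,b^i]$ gives $\Hdepth\mathfrak{D}(\mathfrak{P})=\min\{\rho(b^i):i=1,\dots,r\}$.

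\medskip
\textbf{Proof of (2).} Let $\mathfrak{D}:M\iso\bigoplus_{j\in J}R_j(-s_j)$ be an arbitrary Hilbert decomposition with $\depth\mathfrak{D}=\min_j\dim R_j=:s$. The idea is to reverse the construction: apply $\mathcal{B}_g$ to this isomorphism. Since $\mathcal{B}_g$ is $K$-linear and commutes with finite direct sums, one gets
\[
\mathcal{B}_g(M)\iso\bigoplus_{j\in J}\mathcal{B}_g(R_j(-s_j)),
\]
and passing to Hilbert series gives $H_M(X)_{\preceq g}=\sum_{j\in J}H_{\mathcal{B}_g(R_j(-s_j))}(X)$. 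Now one must check that each nonzero $\mathcal{B}_g(R_j(-s_j))$ has Hilbert series of the form $Q[a^j,b^j](X)$ for suitable $0\preceq a^j\preceq b^j\preceq g$: writing $R_j=K[X_{k}:k\in\sigma_j]$ for some $\sigma_j\subseteq\{1,\dots,n\}$, the support of $\mathcal{B}_g(R_j(-s_j))$ is precisely the interval $[a^j,b^j]$ where $a^j$ is $s_j$ truncated into $[0,g]$ (nonzero only when $0\preceq s_j\preceq g$ componentwise in the relevant coordinates) and $b^j_k=g_k$ for $k\in\sigma_j$ while $b^j_k=a^j_k$ for $k\notin\sigma_j$; in particular $\rho(b^j)=|Z_{b^j}|\ge|\sigma_j|=\dim R_j$. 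Discarding the zero summands, the remaining terms constitute a Hilbert partition $\mathfrak{P}$ of $H_M(X)_{\preceq g}$, and by part (1) the associated decomposition $\mathfrak{D}(\mathfrak{P})$ has $\Hdepth\mathfrak{D}(\mathfrak{P})=\min_j\rho(b^j)\ge\min_j\dim R_j=\depth\mathfrak{D}$. Since $\Hdepth M$ is by definition the supremum of $\depth\mathfrak{D}$ over all Hilbert decompositions $\mathfrak{D}$, and by part (1) every $\mathfrak{D}(\mathfrak{P})$ is itself a Hilbert decomposition, while there are only finitely many Hilbert partitions of the fixed polynomial $H_M(X)_{\preceq g}$ (it has finitely many monomials, each interval is bounded inside $[0,g]$, so only finitely many intervals can occur and finitely many ways to select a multiset of them summing correctly), it follows that $\Hdepth M=\max_{\mathfrak{P}}\Hdepth\mathfrak{D}(\mathfrak{P})$.

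\medskip
\textbf{Main obstacle.} The routine parts are the functoriality bookkeeping and the depth-of-a-direct-sum computation. The step that needs genuine care is the precise identification, in both directions, of the combinatorial data: showing that $\mathcal{B}_g(R_j(-s_j))$ (equivalently $\mathcal{B}_{b}(R(-a))$) always has Hilbert series exactly $Q[a,b](X)$ with the \emph{correct} $b$ — i.e. that the truncation behaviour of $\mathcal{B}_g$ on a shifted polynomial subalgebra produces an interval whose ``top'' $b$ records which variables are free (so that $\rho(b)$ equals the Krull dimension of the subalgebra) — and conversely that the $b^i$ arising from an arbitrary Hilbert partition are automatically $\preceq g$. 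This is where Example~\ref{ex:1} and the explicit description of $\mathcal{B}_g$ must be used carefully; once it is in place, the two inequalities closing the argument are immediate.
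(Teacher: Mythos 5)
Your proof is correct and follows essentially the same route as the paper: in part (1) you read off the decomposition $\mathcal{B}_g(M)\cong\bigoplus_i K[a^i,b^i]$ from the partition and push through $\mathcal{P}_g$ using Proposition~\ref{prop:1} and Lemma~\ref{lemma:interval}; in part (2) you apply $\mathcal{B}_g$ to an arbitrary Hilbert decomposition, identify $\mathcal{B}_g(K[Z](-a))$ with $K[a,b(a)]$ (discarding the vanishing summands with $a\not\preceq g$), observe $\rho(b(a))\ge|Z|$, and then invoke part (1). The small extra observations you make — that $b^i\preceq g$ is forced because the $Q[a^i,b^i]$ have nonnegative coefficients summing to a polynomial supported in $[0,g]$, and that discarding the summands with $s_j\not\preceq g$ can only increase the minimum — are correct and fill in details the paper leaves implicit.
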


\begin{proof}

\bigskip
(1) The Hilbert partition $\mathfrak{P}$ of $H_M(X)_{\preceq g}$ induces a decomposition
\[
\mathcal{B}_g(M)=\bigoplus_{i=1}^{r} K[a^i,b^i]
\]
of $\mathcal{B}_g(M)$ as a direct sum of subquotients of $R$ bounded in the interval $[0,g]$ and seen as $K$-vector spaces. Since $\mathcal{P}_g$ is a $K$-linear functor, Proposition \ref{prop:1}  yields the decomposition
\[
M\iso \mathcal{P}_g(\mathcal{B}_g(M))=\bigoplus_{i=1}^{r} \mathcal{P}_g(K[a^i,b^i]).
\]
By Lemma \ref{lemma:interval}, we obtain the desired decomposition $[\star]$. The statement about the Hilbert depth of $\mathfrak{D}(\mathfrak{P})$ follows straight from the definitions. This proves the statement (1).

\bigskip
(2) Let $T=K[Z](-a)$ be a Hilbert space. Then we have
\[
 \mathcal{B}_g(T) = \left\{
        \begin{tabular}{cc}
        	$K[a,b(a)]$ & if $a \preceq g$; \\
        	$0$ & otherwise, \\
        \end{tabular}
\right.
\]
where the components of $b(a) \in \mathds{N}^n$ are defined as
\[
 b(a)_j= \left\{
        \begin{tabular}{cc}
        	$a_j$ & if $X_j \notin Z$; \\
        	$g_j$ & otherwise.\\
        \end{tabular}
\right.
\]
In particular $\rho (b(a)) \ge |Z|$. Therefore, if $\mathfrak{D}: M\iso \bigoplus_{i=1}^{r} K[Z_i](-a^i)$ is a Hilbert decomposition of $M$, then
\[
\mathcal{B}_g(M)=\bigoplus_{i=1}^{r} \mathcal{B}_g(K[Z_i](-a^i)) = \bigoplus_{\substack{i \\ a^i \preceq g}} K[a^i,b(a)^i]
\]
where $\rho(b(a)^i) \ge |Z_i|$ for all $i$ such that $a^i \preceq g$. It follows that
\[
\mathfrak{P}:H_M(X)_{\preceq g}=\sum_{i=1}^r Q[a^i, b(a)^i](X)
\]
is a Hilbert partition of $H_M(X)_{\preceq g}$, and (1) implies the inequality $\Hdepth \mathfrak{D}(\mathfrak{P})\ge \Hdepth \mathfrak{D}$.
\end{proof}

It is now an easy matter to check:

\begin{coro}\label{cor:hdepth}
Let $M$ a finitely generated multigraded $R$-module. Then
\[
\Hdepth M =\max \{\Hdepth \mathfrak{D}(\mathfrak{P}): \mathfrak{P} \mbox{ is a Hilbert partition of } H_M(X)_{\preceq g}\}.
\]
In particular, there exists a Hilbert partition $\mathfrak{P}:H_M(X)_{\preceq g}=\sum_{i=1}^r Q[a^i, b^i](X)$ of $H_M(X)_{\preceq g}$
such that
\[
\Hdepth M =\min\{\rho(b^i):\  i=1,\ldots,r\}.
\]
\end{coro}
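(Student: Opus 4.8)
The plan is to deduce the corollary directly from Theorem~\ref{theo:main}, whose two halves sandwich the quantity on the right-hand side from above and from below. Write $N := \max\{\Hdepth \mathfrak{D}(\mathfrak{P}) : \mathfrak{P} \text{ a Hilbert partition of } H_M(X)_{\preceq g}\}$; note first that $N$ is a well-defined (finite) number because, as recorded after Definition~\ref{defi:Hpartition}, there are only finitely many Hilbert partitions of the polynomial $H_M(X)_{\preceq g}$.

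The first step is the inequality $N \le \Hdepth M$. By Theorem~\ref{theo:main}(1), every Hilbert partition $\mathfrak{P}$ yields a genuine Hilbert decomposition $\mathfrak{D}(\mathfrak{P})$ of $M$ (this is the content of the isomorphism $[\star]$). Since $\Hdepth M$ is by definition the maximum of $\depth \mathfrak{D}$ taken over \emph{all} Hilbert decompositions $\mathfrak{D}$ of $M$, each term $\Hdepth \mathfrak{D}(\mathfrak{P})$ is bounded above by $\Hdepth M$, and the bound survives passing to the maximum over $\mathfrak{P}$.

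The second step is the reverse inequality $N \ge \Hdepth M$. Here I invoke Theorem~\ref{theo:main}(2): for an \emph{arbitrary} Hilbert decomposition $\mathfrak{D}$ of $M$ there exists a Hilbert partition $\mathfrak{P}$ with $\Hdepth \mathfrak{D}(\mathfrak{P}) \ge \Hdepth \mathfrak{D}$, whence $N \ge \Hdepth \mathfrak{D}$. Thus $N$ dominates $\depth \mathfrak{D}$ for every Hilbert decomposition $\mathfrak{D}$; combined with the first step, $N$ is itself realized by a Hilbert decomposition, namely $\mathfrak{D}(\mathfrak{P}_0)$ for a partition $\mathfrak{P}_0$ attaining the maximum, so $N = \Hdepth M$. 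This also settles the only point that needs care, namely that the maximum in the definition of $\Hdepth M$ is genuinely attained in spite of the a priori infinite family of Hilbert decompositions: it is attained on the finite subfamily $\{\mathfrak{D}(\mathfrak{P})\}_{\mathfrak{P}}$.

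For the ``in particular'' clause I would fix a maximizing partition $\mathfrak{P}_0 : H_M(X)_{\preceq g} = \sum_{i=1}^{r} Q[a^i,b^i](X)$ and apply the last assertion of Theorem~\ref{theo:main}(1) to obtain $\Hdepth \mathfrak{D}(\mathfrak{P}_0) = \min\{\rho(b^i) : i = 1, \ldots, r\}$, which by the previous paragraph equals $\Hdepth M$. I expect no real obstacle: the entire argument is a short assembly of the two parts of Theorem~\ref{theo:main}, the only subtlety worth spelling out being the attainment of the maximum, which is exactly what the finiteness of the set of Hilbert partitions (together with part~(2)) provides.
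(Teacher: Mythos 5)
Your argument is correct and is exactly the intended one: the paper dismisses the corollary with ``It is now an easy matter to check'' precisely because, as you observe, the two inequalities fall out of Theorem~\ref{theo:main}(1) and (2) respectively, with the finiteness of the set of Hilbert partitions guaranteeing that the maximum is attained, and the ``in particular'' clause following from the depth formula in Theorem~\ref{theo:main}(1). Nothing is missing and nothing is done differently from the paper.
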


We finish this section with two examples which show that Corollary \ref{cor:hdepth} can be used in an effective way for computing $\Hdepth M$.

\begin{ex}\label{example1}
Let $R=K[X_1,X_2]$ with $\deg (X_1)=(1,0)$ and $\deg (X_2)=(0,1)$. Let $M=R \oplus (X_1,X_2)R$. First of all, a minimal multigraded free resolution of $M$ is obtained by adding
a minimal multigraded free resolution of $R$
\[
0 \longrightarrow R(-(0,0)) \longrightarrow R(-(0,0)) \longrightarrow 0
\]
and a minimal multigraded free resolution of $(X_1,X_2)R$, namely
\[
0 \longrightarrow R(-(1,1)) \longrightarrow R(-(0,1)) \oplus R(-(1,0)) \longrightarrow (X_1,X_2)R  \longrightarrow 0.
\]
This shows that we may choose $g=(1,1)$. A simple inspection to the shape of $M$ shows that
\[
H_M(X_1,X_2)_{\preceq (1,1)}=1+2X_1+2X_2+2X_1X_2.
\]
It is easy to check that there are no Hilbert partitions containing only monomials of degree two as right ends of the intervals.
The Hilbert partitions containing monomials of degree $\ge 1$ as right ends of the intervals are
\begin{align*}
\mathfrak{P}_1:& (1+X_1+X_2+X_1X_2)+(X_1+X_1X_2)+X_2,\\
\mathfrak{P}_2:& (1+X_1+X_2+X_1X_2)+(X_2+X_1X_2)+X_1,\\
\mathfrak{P}_3:& (1+X_1+X_2+X_1X_2)+X_1+X_2+X_1X_2,\\
\mathfrak{P}_4:& (1+X_1)+(X_1+X_1X_2)+2X_2+X_1X_2,\\
\mathfrak{P}_5:& (1+X_1)+(X_1+X_1X_2)+X_2+(X_2+X_1X_2),\\
\mathfrak{P}_6:& (1+X_1)+2(X_2+X_1X_2)+X_1,\\
\mathfrak{P}_7:& (1+X_1)+(X_2+X_1X_2)+X_1+X_2+X_1X_2,\\
\mathfrak{P}_8:& (1+X_1)+X_1+2X_2+2X_1X_2,\\
\mathfrak{P}_9:& (1+X_2)+(X_2+X_1X_2)+2X_1+X_1X_2,\\
\mathfrak{P}_{10}:& (1+X_2)+2(X_1+X_1X_2)+X_2,\\
\mathfrak{P}_{11}:& (1+X_2)+(X_1+X_1X_2)+(X_2+X_1X_2)+X_1,\\
\mathfrak{P}_{12}:& (1+X_2)+(X_1+X_1X_2)+X_1+X_2+X_1X_2,\\
\mathfrak{P}_{13}:& (1+X_2)+2X_1+X_2+2X_1X_2.
\end{align*}

We see also that $\mathrm{Hdepth}(M)=1$. In the sequel we will focus on the Hilbert partitions $\mathfrak{P}_1$ and $\mathfrak{P}_3$.
They are represented in Figure \ref{fig:two} where the monomials are indicated by $\circ$, and the corresponding coefficients by numbers with an arrow pointing at the circles.
\begin{figure}[h]
\begin{center}
  \begin{tikzpicture}[scale=0.6]
    \draw[thick] (0,7) -- (0,0) -- (7,0) ;
   \draw[] (0,5) -- (5,5) -- (5,0);

    \node [above right] at (0,0) {$\swarrow 1$};
    \node [above left] at (4.5,0) {$ 2\searrow $};
    \node [below right] at (0,5) {$\nwarrow 2 $};
    \node [below left] at (4.5,5) {$ 2 \nearrow$};

    \draw [ultra thick,rounded corners] (-1,-1) rectangle (6,6);

    \draw(5,5) circle(0.2);
    \draw(0,0) circle(0.2);
    \draw(5,0) circle(0.2) ;
    \draw(0,5) circle(0.2);

    \draw[thick](0,5) circle(0.4);
    \draw [thick,rounded corners] (4.5,-0.5) rectangle (5.5,5.5);

   \draw[thick] (10,7) -- (10,0) -- (17,0) ;
   \draw[] (10,5) -- (15,5) -- (15,0);

    \node [above right] at (10,0) {$\swarrow 1$};
    \node [above left] at (15,0) {$2 \searrow$};
    \node [below right] at (10,5) {$\nwarrow 2$};
     \node [below left] at (15,5) {$2 \nearrow$};

    \draw [ultra thick,rounded corners] (9,-1) rectangle (16,6);

    \draw(15,5) circle(0.2);
    \draw(10,0) circle(0.2);
    \draw(15,0) circle(0.2) ;
    \draw(10,5) circle(0.2);
    \draw[thick](15,5) circle(0.4);
    \draw[thick](10,5) circle(0.4);
    \draw[thick](15,0) circle(0.4);
   \end{tikzpicture}
\caption{Hilbert partitions $\mathfrak{P}_1$ and $\mathfrak{P}_3$} \label{fig:two}
\end{center}
\end{figure}

Next we describe the induced Hilbert decompositions. For  $\mathfrak{P}_1$ we have $r=3$ with $[a^1,b^{1}]=[(0,0),(1,1)]$, $[a^2,b^{2}]=[(1,0),(1,1)]$ and $[a^3,b^{3}]=[(0,1),(0,1)]$, and so $Z_{b^{1}}=Z_{b^{2}}=\{X_1,X_2\}$ and $Z_{b^{3}}=\{X_2\}$. Therefore
\begin{align*}
\mathcal{G}([(0,0),(1,1)])=& \{(0,0)\},\\
\mathcal{G}([(1,0),(1,1)])=& \{(1,0)\},\\
\mathcal{G}([(0,1),(0,1)])=& \{(0,1)\}.
\end{align*}

The induced Hilbert decomposition of $M$ is in this case
\begin{eqnarray*}
\mathfrak{D}(\mathfrak{P}_1):  M\iso K[X_1,X_2](-(0,0)) \oplus K[X_1,X_2](-(1,0)) \oplus    K[X_2](-(0,1)).
\end{eqnarray*}

Similarly one gets
\begin{align*}
\mathfrak{D}(\mathfrak{P}_3): M\iso K[X_1,X_2](-(0,0)) \oplus K[X_1,X_2](-(1,1)) \oplus K[X_1](-(1,0)) \oplus K[X_2](-(0,1)).
\end{align*}
\end{ex}

\begin{ex}\label{example2}
Let $R=K[X_1,X_2]$ and $M=K \oplus X_2K[X_2] \oplus X_2K[X_1,X_2]=R/(X_1,X_2)\oplus X_2R/(X_1)\oplus X_2R$.
Similar arguments involving the graded free resolution of $M$ show that one can choose $g=(1,1)$. Then
\[
H_M(X_1,X_2)_{\preceq (1,1)} = 1 + 2X_2 + X_1X_2.
\]
It is easily seen that there are no Hilbert partitions containing only monomials of degree two as right ends of the intervals and
the Hilbert partitions containing monomials of degree $\ge 1$ as right ends of the intervals are
\begin{align*}
\mathfrak{P}_1:& (1+X_2)+(X_2+X_1X_2),\\
\mathfrak{P}_2:& (1+X_2)+X_2+X_1X_2.
\end{align*}

\begin{figure}[h]
\begin{center}
  \begin{tikzpicture}[scale=0.6]
    \draw[thick] (0,7) -- (0,0) -- (7,0) ;
   \draw[] (0,5) -- (5,5) -- (5,0);

    \node [below right] at (0.5,-0.5) {$\nwarrow 1$};
    \node [below right] at (5.5,-0.5) {$\nwarrow 0$};
    \node [below right] at (0.5,4.5) {$\nwarrow 2$};
    \node [below right] at (5.5,4.5) {$\nwarrow 1$};

     \draw [ultra thick,rounded corners] (-0.5,-1) rectangle (0.5,6);
     \draw [ultra thick,rounded corners] (-1,5.5) rectangle (6,4.5);

    \draw(5,5) circle(0.2);
    \draw(0,0) circle(0.2);
    \draw(5,0) circle(0.2);
    \draw(0,5) circle(0.2);

   \end{tikzpicture}
\caption{Hilbert partition $\mathfrak{P}_1$}\label{fig:three}
\end{center}
\end{figure}

They yield the induced Hilbert decompositions
\begin{align*}
\mathfrak{D}(\mathfrak{P}_1): & M\iso K[X_2](-(0,0)) \oplus K[X_1,X_2](-(0,1)),\\
\mathfrak{D}(\mathfrak{P}_2): & M\iso K[X_2](-(0,0)) \oplus K[X_2](-(0,1)) \oplus    K[X_1,X_2](-(1,1)).
\end{align*}
Notice that $\mathrm{Hdepth}~\mathfrak{D}(\mathfrak{P}_1)=\mathrm{Hdepth}~\mathfrak{D}(\mathfrak{P}_2)=1$, and we have  $\mathrm{Hdepth}(M)=1$.

\end{ex}

\section{A method for computing the Stanley depth of a module}\label{AlgStanley}

In this section we shall use Theorem \ref{theo:main} in order to compute the Stanley depth of a finitely generated multigraded $R$-module $M$. For simplicity we shall make the same assumptions as in Section \ref{AlgHilb}. First of all we recall what Stanley depth is.

\begin{defi}
A \emph{Stanley
decomposition of} $M$ is a finite family
\[
\mathfrak{F} : (R_i,m_i)_{i\in I}
\]
in which $m_i$ are homogeneous elements of $M$ and $R_i$ are subalgebras generated by a subset of the indeterminates of
$R$ such that $R_i\bigcap \Ann m_i=0$ for each $i\in I$, and
\[
M=\Dirsum_{i\in I} m_iR_i
\]
as a multigraded $K$-vector space. The multigraded $K$-subspace $m_iR_i\subset M$ is called a \emph{Stanley space}.
\end{defi}

Note that every Stanley decomposition induces a Hilbert decomposition and, in particular, it has a well-defined depth. Therefore the following definition makes sense:

\begin{defi}
The \emph{Stanley depth} of a module $M$ is
\[
\max\{\depth \mathfrak{F} \ | \ \mathfrak{F} \text{ is a Stanley decomposition of } M\}
\]
and will be denoted by $\Stdepth M$.
\end{defi}

\begin{rema} Notice that Stanley depth equals Hilbert depth if $\dim_K M_a \le 1$ for all $a\in \NN^n$ and $R_sM_t\not =0$ whenever
$R_s,M_t, M_{s+t}\not =0$ (see \cite[Proposition 2.8]{BKU}). This is true for example if $M=I/J$ where $J \subset I$ are two monomial ideals. In this particular case, \cite[Theorem 2.1]{HVZ} provides a method to compute $\Stdepth M=\Hdepth M$. The two methods coincide in this case (consider the poset of monomials of $H_M(X)_{\preceq g}$).

However, this result does not extend naturally to a method for computing Stanley depth, what explains the difficulty to compute Stanley depth of a finitely generated multigraded $R$-module.
Theorem \ref{theo:main} fills directly this gap if $\Stdepth M=\Hdepth M$, like in the situation described above.
\end{rema}

In the sequel we present a possible approach to the problem of computing Stanley depth of a finitely generated multigraded $R$-module $M$. The next proposition shows that one can decide in a \emph{finite} number of steps whether a Hilbert decomposition is inducing a Stanley decomposition or not (compare with \cite[Proposition 2.9]{BKU}, where essentially an infinite number of steps is necessary for deciding when a Hilbert decomposition can be converted into a Stanley one).

\begin{propo}\label{hilbert:stanley} Let $\mathfrak{P}:H_M(X)_{\preceq g}=\sum_{i=1}^r Q[a^i, b^i](X)$ be a Hilbert partition of $H_M(X)_{\preceq g}$, and let
\[
\mathfrak{D}(\mathfrak{P}): M\iso\bigoplus_{i=1}^r\Big(\bigoplus_{c\in \mathcal{G}[a^i,b^i]} K[Z_{b^i}](-c)\Big)=\Dirsum_{i\in I} R_i(-s_i)
\]
be the induced Hilbert decomposition of $M$. Remark that $I$ is finite (it depends on $\mathfrak{P}$ and the finite sets $\mathcal{G}[a^i,b^i]$) and that $s_i\preceq g$. For all $i\in I$,  choose $0\not = m_i\in M_{s_i}$. The following statements are equivalent:
\begin{enumerate}
\item The decomposition
\[
M=\Dirsum_{i\in I} m_iR_i
\]
is a Stanley decomposition of M.
\item For all $i\in I$ we have that $R_i\bigcap \Ann m_i=0$, and if
\[
\sum_{i\in I} m_i (\sum_{s_i+t_{i_j}\preceq g} \alpha_{i_j}X^{t_{i_j}})=0
\]
with $\alpha_{i_j}\in K$, $X^{t_{i_j}}\in R_i$, then $\alpha_{i_j}=0$ for all $i_j$.
\end{enumerate}
All the Stanley decompositions induced by suitable choices of elements $m_i$ have the same $\Stdepth$ equal to $\Hdepth \mathfrak{D}(\mathfrak{P})$.
\end{propo}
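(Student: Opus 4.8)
\textbf{Proof proposal for Proposition \ref{hilbert:stanley}.}

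The plan is to prove the equivalence $(1)\Leftrightarrow(2)$ first and then deduce the final statement about $\Stdepth$ from it together with Theorem \ref{theo:main}. The implication $(1)\Rightarrow(2)$ is immediate: if $M=\bigoplus_{i\in I} m_iR_i$ is a Stanley decomposition, then by definition $R_i\cap\Ann m_i=0$ for all $i$, and the displayed relation in $(2)$ is just the statement that a finite element of $M$ which is zero must have all its components in the direct sum equal to zero; since each $m_i\sum_j\alpha_{i_j}X^{t_{i_j}}$ lies in the Stanley space $m_iR_i$ and these form a direct sum, every summand vanishes, and then $R_i\cap\Ann m_i=0$ forces $\alpha_{i_j}=0$. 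For $(2)\Rightarrow(1)$ I would argue that the hypotheses in $(2)$ say exactly that the sum $\sum_{i\in I} m_iR_i$ is direct (the vanishing-of-coefficients condition) and that each summand $m_iR_i$ is a genuine Stanley space (the condition $R_i\cap\Ann m_i=0$). What remains is to check that $\sum_{i\in I} m_iR_i = M$ as graded vector spaces, and here the key point is a Hilbert-function count: by construction $\mathfrak{D}(\mathfrak{P})$ is a Hilbert decomposition of $M$, so $\sum_i \dim_K (R_i(-s_i))_a = \dim_K M_a$ for every $a$; since $R_i\cap\Ann m_i=0$ gives $\dim_K(m_iR_i)_a = \dim_K(R_i(-s_i))_a$, and since $\sum_i m_iR_i\subseteq M$ is a graded subspace with the matching dimension in each degree, equality holds.

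For the final assertion, note first that by Proposition \ref{prop:ezra} and the assumption that $M$ is positively $g$-determined, every generator of $M$ sits in a degree $\preceq g$; combined with $s_i\preceq g$ this guarantees that a nonzero $m_i\in M_{s_i}$ exists for each $i\in I$, so the construction is not vacuous. The depth of the Stanley decomposition $\bigoplus_{i\in I} m_iR_i$ is by definition $\min\{\rank R_i : i\in I\}$, and since $R_i = K[Z_{b^j}]$ for the index $j$ to which $i$ belongs, this minimum equals $\min\{|Z_{b^j}| : j=1,\dots,r\} = \min\{\rho(b^j): j=1,\dots,r\}$, which by Theorem \ref{theo:main}(1) is precisely $\Hdepth\mathfrak{D}(\mathfrak{P})$. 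This number depends only on $\mathfrak{P}$ and not on the particular choice of the $m_i$, so all Stanley decompositions arising this way share the same depth.

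The main obstacle I anticipate is the surjectivity step in $(2)\Rightarrow(1)$: one must be careful that the Hilbert-function count is legitimate, i.e.\ that the inclusion $\sum_i m_iR_i\subseteq M$ really is an inclusion of \emph{finitely generated} graded pieces in each degree so that equal dimensions forces equality; this is fine because $M\in\mathcal M$ and each graded component is finite-dimensional, but it deserves an explicit sentence. A secondary subtlety is making sure the indexing is handled cleanly — the set $I$ runs over pairs $(j,c)$ with $j\in\{1,\dots,r\}$ and $c\in\mathcal G[a^j,b^j]$, and one should state once and for all that $R_i=K[Z_{b^j}]$ and $s_i=c$ for such a pair, so that the translation between the two displayed forms of $\mathfrak{D}(\mathfrak{P})$ is transparent.
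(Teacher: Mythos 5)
Your implication $(1)\Rightarrow(2)$ is fine, and your Hilbert-function count for surjectivity in $(2)\Rightarrow(1)$ is a legitimate ingredient (indeed the paper leaves that step implicit). But the directness half of $(2)\Rightarrow(1)$ has a genuine gap. You write that the hypotheses in $(2)$ ``say exactly that the sum $\sum_{i\in I} m_iR_i$ is direct,'' and this is not what $(2)$ says: the vanishing-of-coefficients condition in $(2)$ is restricted to the monomials $X^{t_{i_j}}\in R_i$ with $s_i+t_{i_j}\preceq g$, i.e.\ it only asserts linear independence of the $m_iR_i$ in degrees $\preceq g$. A relation
\[
\sum_{i\in I} m_i\,\alpha_i X^{t_i}=0,\qquad s_i+t_i=s,
\]
in a degree $s$ with $s\not\preceq g$ is not covered by $(2)$ at all. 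That this finite, bounded condition nevertheless implies directness in every degree is precisely the nontrivial content of the proposition --- it is what makes the test finitely checkable, as advertised.

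The paper closes this gap by exploiting that $M$ is positively $g$-determined. Given a relation in degree $s\not\preceq g$, one checks that for every index $i$ appearing, the monomial $X^{s-s\wedge g}$ lies in $R_i=K[Z_{b^i}]$: if $X_l\notin Z_{b^i}$ then $b^i_l<g_l$, so the $l$-th coordinate of $s_i$ equals that of $s$ and is $\le g_l$, hence $(s-s\wedge g)_l=0$. Since multiplication by $X^{s-s\wedge g}$ is an isomorphism $M_{s\wedge g}\to M_s$, one can divide the relation by $X^{s-s\wedge g}$ and land in degree $s\wedge g\preceq g$, where condition $(2)$ now applies and forces all coefficients to vanish. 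Your proposal skips this reduction entirely, so as written it does not establish $(2)\Rightarrow(1)$. The final paragraph about $\Stdepth=\Hdepth\mathfrak{D}(\mathfrak{P})$ and the indexing remark are both correct.
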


\begin{proof} We only have to show that (2) implies (1). The condition $R_i\bigcap \Ann m_i=0$ assures that $m_iR_i$ is a Stanley space. In order to prove that the sum in (1) is direct, it suffices to show that any two different Stanley spaces in (1) have no homogeneous element in common.
\medskip

Let $m_s\in M_s$ be a homogeneous element and assume for simplicity that
\[
m_s\in m_{s_1}K[Z_{b^1}]\cap m_{s_2}K[Z_{b^2}],
\]
where $m_{s_1}\in M_{s_1}$, $m_{s_2}\in M_{s_2}$ and $s_1, s_2 \in I$. It is clear that $s_1\preceq s$ and $s_2\preceq s$, and therefore
\[
m_{s_1}  \alpha_{1}X^{t_{1}}= m_{s_2} \alpha_{2}X^{t_{2}},
\]
where $s_1+t_1=s_2+t_2=s$, $\alpha_{1}, \alpha_{2}\in K$, $X^{t_{1}}\in K[Z_{b^1}]$ and $X^{t_{2}}\in K[Z_{b^2}]$. If $s\preceq g$ then (2) implies directly $\alpha_{1}=\alpha_{2}=0$.
\medskip

Next let us suppose $s\not\preceq g$. We have $s_1\preceq s$ and $s_1 \preceq g$, which implies $s_1\preceq s\wedge g$. We claim that $X^{s-s\wedge g}\in  K[Z_{b^1}]$.
If $b^1_l=g_l$ then $X_l\in Z_{b^1}$. Otherwise, if $b^1_l<g_l$ then $X_l\not\in Z_{b^1}$ and hence $s_l=(s_1)_l\le g_l$. It follows $(s\wedge g)_l = s_l$ and hence
$(s-s\wedge g)_l = 0$.
\medskip

Similarly we have $X^{s-s\wedge g}\in  K[Z_{b^2}]$.
Since $M$ is positively $g$-determined, the multiplication map
\[
\cdot X^{s-s\wedge g} : M_{s\wedge g} \longrightarrow M_{s}
\]
is an isomorphism. Hence
\[
m_{s_1}  \alpha_{1}X^{t_{1}-s+s\wedge g}= m_{s_2} \alpha_{2}X^{t_{2}-s+s\wedge g}.
\]
Now it is easily seen that
\[
s_1+t_{1}-s+s\wedge g=s_2+t_{2}-s+s\wedge g=s\wedge g\preceq g
\]
and (2) implies $\alpha_{1}=\alpha_{2}=0$.
\end{proof}

Remark that in general a Hilbert partition will not induce a Stanley decomposition, as the following example shows.

\begin{ex}
Let us consider again the module
\[
M=K\oplus X_2K[X_2]\oplus X_2K[X_1,X_2]=R/(X_1,X_2)\oplus X_2R/(X_1)\oplus X_2R
\]
of Example \ref{example2} (the right side is actually the R-module structure). Then $K[X_2](-(0,0))\oplus K[X_1,X_2](-(0,1))$ is a Hilbert decomposition of $M$ which does not induce a Stanley decomposition $M=m_1K[X_2]\oplus m_2K[X_1,X_2]$. Since $M_{(0,0)}=K$ and every element in $K$ is annihilated by the ideal $(X_1,X_2)$, there is no possible choice for $m_1$. The same holds for the Hilbert decomposition $K[X_2](-(0,0)) \oplus K[X_2](-(0,1)) \oplus    K[X_1,X_2](-(1,1))$. We conclude that $\Stdepth M=0$.
\end{ex}

Proposition \ref{hilbert:stanley} allows us to prove the main result of this section, which shows that the Stanley depth can be computed by looking at the Hilbert partitions.

\begin{theo} \label{theo:section4}
Let $\mathfrak{F}$ be a Stanley decomposition of $M$. Then there exists a Hilbert partition $\mathfrak{P}$ of $H_M(X)_{\preceq g}$ inducing a Hilbert decomposition
\[
\mathfrak{D}(\mathfrak{P}): M\iso\Dirsum_{i\in I} R_i(-s_i)
\]
and $0\not = m_i\in M_{s_i}$ for all $i\in I$, such that the Hilbert decomposition $\mathfrak{D}(\mathfrak{P})$ induces a Stanley decomposition
\[
\overline{\mathfrak{D}(\mathfrak{P})}: M=\Dirsum_{i\in I} m_iR_i
\]
with $\Stdepth \overline{\mathfrak{D}(\mathfrak{P})}\ge \Stdepth \mathfrak{F}$.
\end{theo}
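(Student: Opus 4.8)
The plan is to start from an arbitrary Stanley decomposition $\mathfrak{F}: M=\bigoplus_{i\in I} m_iR_i$ and track it through the functorial machinery of Section~\ref{StanHilb}. Write $R_i=K[Z_i]$ and let $a^i\in\NN^n$ be the multidegree of $m_i$; since $\mathfrak{F}$ is in particular a Hilbert decomposition, part~(2) of Theorem~\ref{theo:main} already produces a Hilbert partition $\mathfrak{P}:H_M(X)_{\preceq g}=\sum_{i=1}^r Q[a^i,b(a)^i](X)$, where $b(a)^i_j=a^i_j$ if $X_j\notin Z_i$ and $b(a)^i_j=g_j$ otherwise, and $\Hdepth\mathfrak{D}(\mathfrak{P})\ge\Hdepth\mathfrak{D}=\Stdepth\mathfrak{F}$. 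The subtlety is that the induced Hilbert decomposition $\mathfrak{D}(\mathfrak{P})$ from $[\star]$ need not literally be the summands $m_iR_i$: the functor $\mathcal{B}_g$ truncates each $m_iR_i$ to $\mathcal{B}_g(m_iR_i)=\mathcal{B}_g(K[Z_i](-a^i))$ (using that $M$ is positively $g$-determined, so $\mathcal{B}_g$ sees all the Betti data), and then $\mathcal{P}_g$ re-expands it as $\bigoplus_{c\in\mathcal{G}[a^i,b(a)^i]}K[Z_{b(a)^i}](-c)$, a sum that may have more than one summand.

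The key step is to check that this re-expanded decomposition is \emph{still} a Stanley decomposition, via the criterion of Proposition~\ref{hilbert:stanley}. Concretely, for each original summand $m_iR_i=m_iK[Z_i]$ and each $c\in\mathcal{G}[a^i,b(a)^i]$, the element $c$ has the form $a^i+t$ with $X^t$ a monomial in $K[Z_i]\setminus K[Z_{b(a)^i}]$ (the variables indexing the $\mathcal{G}$-expansion are exactly those $X_j\in Z_i$ with $b(a)^i_j\neq g_j$, which by construction cannot happen, so in fact $Z_{b(a)^i}=Z_i$ and $\mathcal{G}[a^i,b(a)^i]=\{a^i\}$ — one should double-check this collapse, as it makes $\mathfrak{D}(\mathfrak{P})$ index-for-index equal to $\mathfrak{F}$ and the argument short). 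If instead the $\mathcal{G}$-sets are genuinely larger, one chooses $m_c:=X^{c-a^i}m_i\in M_c$ and must verify both conditions of Proposition~\ref{hilbert:stanley}(2): that $K[Z_{b(a)^i}]\cap\Ann m_c=0$ (inherited from $K[Z_i]\cap\Ann m_i=0$, since $X^{c-a^i}$ is a nonzerodivisor on $m_iK[Z_i]$), and that no nontrivial $K$-linear relation in degrees $\preceq g$ holds among the $m_c$ — but any such relation, after multiplying up, would be a relation among the $m_iR_i$ inside $M$, contradicting directness of $\mathfrak{F}$.

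Once Proposition~\ref{hilbert:stanley} applies, it gives $\overline{\mathfrak{D}(\mathfrak{P})}: M=\bigoplus_{i\in I}m_iR_i$ as a Stanley decomposition with $\Stdepth\overline{\mathfrak{D}(\mathfrak{P})}=\Hdepth\mathfrak{D}(\mathfrak{P})$, and combining with the inequality from Theorem~\ref{theo:main}(2) yields $\Stdepth\overline{\mathfrak{D}(\mathfrak{P})}=\Hdepth\mathfrak{D}(\mathfrak{P})\ge\Stdepth\mathfrak{F}$, which is the claim. The main obstacle I anticipate is the bookkeeping in the middle step: making sure that the elements $m_c$ chosen on the re-expanded summands are mutually compatible (i.e.\ that the single criterion in Proposition~\ref{hilbert:stanley}(2) is actually satisfied, not just pairwise directness), and handling the edge case where $\mathcal{G}[a^i,b(a)^i]$ is a singleton versus larger. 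Everything else — the nonzerodivisor property, the passage between relations in $M$ and relations in $\mathcal{B}_g(M)$, the functoriality — is routine given Propositions~\ref{prop:1} and~\ref{hilbert:stanley}.
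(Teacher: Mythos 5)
Your overall plan follows the paper: start from $\mathfrak{F}$, build the Hilbert partition $\mathfrak{P}$ from the intervals $[a^i, b(a^i)]$ as in Theorem~\ref{theo:main}(2), and then verify criterion~(2) of Proposition~\ref{hilbert:stanley} to upgrade $\mathfrak{D}(\mathfrak{P})$ to a Stanley decomposition. But the verification step is where the content lies, and your sketch gets the bookkeeping wrong in a way that hides the essential point.

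First, you are right that $\mathcal{G}[a^i,b(a^i)]=\{a^i\}$: if $b(a^i)_j<g_j$ then $X_j\notin Z_i$ and $a^i_j<g_j$, so $b(a^i)_j=a^i_j$, forcing $c_j=a^i_j$. But you then conclude ``so in fact $Z_{b(a^i)}=Z_i$ and the argument collapses.'' This is false: $Z_{b(a^i)}=Z_i\cup\{X_l : X_l\notin Z_i,\ a^i_l=g_l\}$ may be \emph{strictly larger} than $Z_i$, and that enlargement is exactly what the theorem exploits to produce a possibly deeper Stanley decomposition. Your ``if the $\mathcal{G}$-sets are genuinely larger'' branch is therefore chasing a scenario that never occurs, while the case that does occur --- $Z_{b(a^i)}\supsetneq Z_i$ with $c=a^i$ and $m_c=m_i$ --- is never treated. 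In particular your ``nonzerodivisor'' remark does not address the needed statement, which is $\Ann m_i\cap K[Z_{b(a^i)}]=0$. The paper obtains this by adjoining the variables $X_l$ with $a^i_l=g_l$ one at a time and using that $M$ is positively $g$-determined (multiplication by $X_l$ is an isomorphism above level $g_l$, and $\Ann m_i$ is multihomogeneous, so it suffices to check on monomials).

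Second, the directness argument (``after multiplying up, would be a relation among the $m_iR_i$'') is missing its crucial ingredient. A relation $\sum_i m_i\bigl(\sum_j\alpha_{i_j}X^{t_{i_j}}\bigr)=0$ with $X^{t_{i_j}}\in K[Z_{b(a^i)}]$ and $a^i+t_{i_j}\preceq g$ is a priori \emph{not} a relation inside the original Stanley spaces, precisely because $Z_{b(a^i)}$ can contain variables not in $Z_i$. One needs the paper's observation $(\ast)$: if $X^t\in K[Z_{b(a)}]$ and $a+t\preceq g$, then $X^t\in K[Z]$ (because any variable $X_l\in Z_{b(a)}\setminus Z$ has $a_l=g_l$, so $t_l>0$ would force $a_l+t_l>g_l$). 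Only then do the monomials lie in $K[Z_i]$, the relation becomes one in $\mathfrak{F}$, and directness of $\mathfrak{F}$ kills the coefficients. Without $(\ast)$ your step ``after multiplying up'' is unjustified, and with the $Z_{b(a^i)}=Z_i$ misconception it would never arise as an issue to resolve. So the strategy is right but both nontrivial verifications --- the annihilator enlargement and the fact $(\ast)$ --- are absent or replaced by an argument aimed at the wrong case.
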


\begin{proof} Let $mK[Z]$ be a Stanley space in $\mathfrak{F}$ such that $m\in M_{a}$. Then we have
\[
 \mathcal{B}_g(mK[Z]) = \left\{
        \begin{tabular}{cc}
        	$K[a,b(a)]$ & if $a\preceq g$; \\
        	$0$ & otherwise, \\
        \end{tabular}
\right.
\]
where
\[
 b(a)_l= \left\{
        \begin{tabular}{cc}
        	$a_l$ & if $X_l \notin Z$; \\
        	$g_l$ & otherwise.\\
        \end{tabular}
\right.
\]
Suppose $a\preceq g$. Then $X_l\in Z_{b(a)}$ only if $X_l\in Z$ or $a_l=g_l$. We have
\[
\Ann m\cap K[Z]=0
\]
since  $mK[Z]$ is a Stanley space. If $a_l=g_l$ then $\Ann m\cap K[Z\cup \{X_l\}]=0$ since $M$ is positively $g$-determined, so the multiplication map $\cdot X_l : M_a \longrightarrow M_{a+e_l}$ is injective. We may replace $Z$ by $Z\cup \{X_l\}$ and after a finite number of steps we deduce that
\[
\Ann m\cap K[Z_{b(a)}]=0.
\]

Remark the following fact:
\begin{equation}
\mbox{If~} X^t \in K[Z_{b(a)}] \mbox{~and~} a+t \preceq g, \mbox{~then~} X^t\in K[Z]. \tag{$\ast$} \label{eq:FI}
\end{equation}

Let $\mathfrak{F}: M=\bigoplus_{i=1}^{r} m_iK[Z_i]$ with $m_i\in M_{a^i}$ be a Stanley decomposition of $M$. Then
\[
\mathfrak{P}:H_M(X)_{\preceq g}=\sum_{a^i\preceq g} Q[a^i, b(a^i)](X)
\]
is a Hilbert partition of $H_M(X)_{\preceq g}$ and $\Ann m_i\cap K[Z_{b(a^i)}]=0$.  Moreover, if
\[
\sum_{a^i\preceq g} m_i (\sum_{a^i+t_{i_j}\preceq g} \alpha_{i_j}X^{t_{i_j}})=0
\]
with $\alpha_{i_j}\in K$, $X^{t_{i_j}}\in K[Z_{b(a^i)}]$, then the fact \eqref{eq:FI} implies that $X^{t_{i_j}}\in K[Z_i]$. It follows that $\alpha_{i_j}=0$ for all $i_j$ since  $\mathfrak{F}$ is a Stanley decomposition. By Proposition \ref{hilbert:stanley} it is easily seen that the induced decomposition
\[
\overline{\mathfrak{D}(\mathfrak{P})}: \sum_{a^i\preceq g} m_i K[Z_{b(a^i)}]
\]
is a Stanley decomposition. Finally, Theorem \ref{theo:main} yields the desired inequality $\Stdepth \overline{\mathfrak{D}(\mathfrak{P})}\ge \Stdepth \mathfrak{F}$.

\end{proof}

A procedure for the computation of the Stanley depth can be simply deduced now.

\begin{coro} \label{coro:last1}
$\Stdepth M$ may be computed by considering the Hilbert decompositions $\mathfrak{D}(\mathfrak{P})$, where $\mathfrak{P}$ runs over the finitely many Hilbert partitions of $H_M(X)_{\preceq g}$, and selecting those for which there exist $m_i\in M$ for all $i\in I$ such that the condition (2) in Proposition \ref{hilbert:stanley} is fulfilled.
\end{coro}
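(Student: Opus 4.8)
The plan is to read the statement as the identity
\[
\Stdepth M=\max\{\Hdepth\mathfrak{D}(\mathfrak{P}) : \mathfrak{P}\text{ a Hilbert partition of }H_M(X)_{\preceq g}\text{ admitting }m_i\text{ with property (2)}\},
\]
where ``$\mathfrak{P}$ admitting $m_i$ with property (2)'' means that for some choice of nonzero $m_i\in M_{s_i}$ the pair $(\mathfrak{D}(\mathfrak{P}),(m_i)_{i\in I})$ satisfies condition (2) of Proposition~\ref{hilbert:stanley}, and then to prove this identity by two inequalities after recording that the maximum ranges over a finite set. The finiteness is immediate: $H_M(X)_{\preceq g}$ is supported on the finite set $\{0\preceq a\preceq g\}$, so in any Hilbert partition $H_M(X)_{\preceq g}=\sum_i Q[a^i,b^i](X)$ all of the $a^i,b^i$ lie in $[0,g]$, and each interval can occur with multiplicity at most $\max_{0\preceq a\preceq g}H(M,a)$; hence there are only finitely many Hilbert partitions of $H_M(X)_{\preceq g}$, and a fortiori only finitely many entering the maximum.

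For ``$\ge$'' I would take any Hilbert partition $\mathfrak{P}$ admitting $m_i$ with property (2); by Proposition~\ref{hilbert:stanley} the induced decomposition $\overline{\mathfrak{D}(\mathfrak{P})}\colon M=\bigoplus_{i\in I}m_iR_i$ is then a genuine Stanley decomposition of $M$ with $\depth\overline{\mathfrak{D}(\mathfrak{P})}=\Hdepth\mathfrak{D}(\mathfrak{P})$, so $\Stdepth M\ge\Hdepth\mathfrak{D}(\mathfrak{P})$, and taking the maximum over all such $\mathfrak{P}$ gives the inequality. For ``$\le$'' I would fix a Stanley decomposition $\mathfrak{F}$ of $M$ with $\depth\mathfrak{F}=\Stdepth M$ (this maximum is attained, the depth of a decomposition being a nonnegative integer at most $n$); Theorem~\ref{theo:section4} then produces a Hilbert partition $\mathfrak{P}$ of $H_M(X)_{\preceq g}$ and elements $m_i\in M_{s_i}$ for which $\mathfrak{D}(\mathfrak{P})$ is promoted to a Stanley decomposition $\overline{\mathfrak{D}(\mathfrak{P})}$ with $\depth\overline{\mathfrak{D}(\mathfrak{P})}\ge\depth\mathfrak{F}=\Stdepth M$. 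This $\mathfrak{P}$ admits $m_i$ with property (2) --- that is precisely what ``$\overline{\mathfrak{D}(\mathfrak{P})}$ is a Stanley decomposition'' means, by Proposition~\ref{hilbert:stanley} --- so it enters the maximum, and its value is $\Hdepth\mathfrak{D}(\mathfrak{P})=\depth\overline{\mathfrak{D}(\mathfrak{P})}\ge\Stdepth M$. The two inequalities yield the identity, and since the right-hand side is a maximum over the finite, explicitly listable set of Hilbert partitions, it constitutes the asserted procedure.

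The remaining point to pin down is that for each fixed $\mathfrak{P}$ one can decide in finitely many steps whether it admits $m_i$ with property (2); here I would exploit that $M$ is positively $g$-determined. On the one hand, $R_i\cap\Ann m_i=0$ holds if and only if $m_iX^t\neq 0$ for the finitely many monomials $X^t\in R_i$ with $s_i+t\preceq g$, since multiplication by $X_l$ is an isomorphism on those degrees $a$ with $a_l\ge g_l$, so vanishing of any $m_iX^t$ with $X^t\in R_i$ can be traced back to vanishing of such a ``bounded'' product. On the other hand, the linear part of condition (2) concerns only the finitely many vectors $m_iX^{t_{i_j}}$ with $s_i+t_{i_j}\preceq g$, all lying in the finite-dimensional space $\mathcal{B}_g(M)=\bigoplus_{0\preceq a\preceq g}M_a$ and depending linearly on the coordinates of the $m_i$ in fixed bases of the $M_{s_i}$. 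Thus ``$\mathfrak{P}$ admits $m_i$ with property (2)'' is the non-emptiness of an explicitly given constructible (indeed Zariski-open) subset of $\prod_{i\in I}M_{s_i}$, cut out by the non-vanishing of finitely many linear forms and finitely many determinants, which is decidable. I expect this elimination of the quantifier ``there exist $m_i$'', as opposed to an a priori infinite search over the spaces $M_{s_i}$, to be the only genuinely delicate step; everything else is a direct assembly of Theorem~\ref{theo:section4}, Proposition~\ref{hilbert:stanley}, and the finiteness of the set of Hilbert partitions.
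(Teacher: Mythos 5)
Your proposal is correct, and its core (the two inequalities via Theorem~\ref{theo:section4} and Proposition~\ref{hilbert:stanley}, together with the finiteness of the set of Hilbert partitions of the bounded polynomial $H_M(X)_{\preceq g}$) is exactly the unfolding of the paper's argument; the paper states the corollary as an immediate consequence of those two results and gives no separate proof. Where you genuinely go beyond the paper is in the final paragraph: the paper's own remark immediately after the corollary concedes that one still ``has to test the conditions in Proposition~\ref{hilbert:stanley}~(2) for all systems of elements $0\neq m_i\in M_{s_i}$,'' i.e.\ it leaves the existential quantifier over $(m_i)$ as an a priori infinite search when $K$ is infinite. Your observation that this search is eliminable --- that $R_i\cap\Ann m_i=0$ reduces, via positive $g$-determination and the fact that $\Ann m_i$ is a monomial-containing multigraded ideal, to the non-vanishing of finitely many ``bounded'' products $m_iX^t$, and that the linear-independence part of (2) is the invertibility of a single square matrix over $\mathcal{B}_g(M)$ with entries linear in the $m_i$ --- turns the corollary into a genuinely effective procedure and is a real sharpening of the paper. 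One small precision: the conditions $R_i\cap\Ann m_i=0$ are not in general ``non-vanishing of linear forms'' but rather avoidance of finitely many linear subspaces (the kernels of the maps $\cdot X^t\colon M_{s_i}\to M_{s_i+t}$), which is still Zariski-open; the rest of the argument is unaffected.
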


The following example shows that Stanley decompositions induced by Hilbert partitions can effectively be computed.

\begin{ex}
We return to the Example \ref{example1}. Let $M=R \oplus (X_1,X_2)R$.
We consider again the Hilbert partitions
\begin{align*}
\mathfrak{P}_1:& (1+X_1+X_2+X_1X_2)+(X_1+X_1X_2)+X_2,\\
\mathfrak{P}_3:& (1+X_1+X_2+X_1X_2)+X_1+X_2+X_1X_2.
\end{align*}
It is easy to check that
\begin{align*}
\overline{\mathfrak{D}(\mathfrak{P}_1)}: M= & (1,0)K[X_1,X_2] \oplus (0,X_1)K[X_1,X_2] \oplus  (0,X_2) K[X_2],\\
\overline{\mathfrak{D}(\mathfrak{P}_3)}: M= & (1,0)K[X_1,X_2] \oplus (0,X_1X_2)K[X_1,X_2] \oplus (0,X_1)K[X_1] \oplus (0,X_2)K[X_2]
\end{align*}
are induced Stanley decompositions.
\end{ex}

\begin{rema} Corollary \ref{coro:last1} shows that the answer to Question \ref{Q:Herzog} is "Yes". However, in general the method presented above may be very difficult to use, since one has to test the conditions in Proposition
\ref{hilbert:stanley} (2) for all systems of elements $0\not = m_i\in M_{s_i}$. If we add the assumption that $\dim_K M_a \le 1$ for all $a\in \ZZ^n$, this simplifies considerably. We introduce below a precise algorithm for computing the Stanley depth in this case, solving Problem \ref{P:Herzog}.
\end{rema}

\begin{coro} \label{coro:last2}
Assume in addition that $\dim_K M_a \le 1$ for all $a\in \ZZ^n$. Then $\Stdepth M$ may be computed by the following Algorithm \ref{algorithm}.
\end{coro}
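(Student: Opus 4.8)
The plan is to show that, under the hypothesis $\dim_K M_a\le 1$, the selection procedure of Corollary~\ref{coro:last1} becomes a deterministic finite test, and that Algorithm~\ref{algorithm} is precisely an implementation of that test combined with the optimisation $\Hdepth\mathfrak{D}(\mathfrak{P})=\min\{\rho(b^i):i=1,\dots,r\}$ from Theorem~\ref{theo:main}. Granting this, correctness is immediate from Corollary~\ref{coro:last1} (which, via Theorem~\ref{theo:section4}, identifies $\Stdepth M$ with the maximum of $\Hdepth\mathfrak{D}(\mathfrak{P})$ taken over those Hilbert partitions $\mathfrak{P}$ of $H_M(X)_{\preceq g}$ that induce a Stanley decomposition). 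So the substantive points are (i) that the Stanley test collapses to finitely many effective checks, and (ii) that the loop over all $\mathfrak{P}$ terminates.

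For (i) I would argue in three steps. First, since $s_i\preceq g$ and $X^{s_i}$ already occurs in $H_M(X)_{\preceq g}$ (it is the $t=0$ term of the summand $K[Z_{b^i}](-s_i)$), each component $M_{s_i}$ is exactly one-dimensional, so a nonzero $m_i\in M_{s_i}$ is unique up to a scalar and the truth of condition~(2) of Proposition~\ref{hilbert:stanley} does not depend on that scalar; hence ``there exist $m_i$ satisfying~(2)'' becomes ``(2) holds for the canonical choice of the $m_i$''. Second, under $\dim_K M_a\le 1$ the linear--independence clause of Proposition~\ref{hilbert:stanley}(2) is automatic once $R_i\cap\Ann m_i=0$ for every $i$: since $\mathfrak{D}(\mathfrak{P})$ is a Hilbert decomposition one has $H_M(X)_{\preceq g}=\sum_i\big(\sum_{X^t\in K[Z_{b^i}],\, s_i+t\preceq g}X^{s_i+t}\big)$, so every $a\preceq g$ is the degree of at most one slot $(i,t)$; decomposing a relation $\sum_i m_i(\sum_j\alpha_{i_j}X^{t_{i_j}})=0$ degree by degree then leaves in each degree a single summand $\alpha_{i_j}m_iX^{t_{i_j}}$, which vanishes only if $\alpha_{i_j}=0$, because $m_iX^{t_{i_j}}\ne0$ by $R_i\cap\Ann m_i=0$. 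Thus the test reduces to checking that $\cdot X^t:M_{s_i}\to M_{s_i+t}$ is nonzero for every $i$ and every $X^t\in K[Z_{b^i}]$. Third, because $M$ is positively $g$-determined (Proposition~\ref{prop:ezra}) it suffices to check this for the finitely many $t$ with $s_i+t\preceq g$: in a direction $X_l\in Z_{b^i}$ one has $b^i_l=g_l$, so replacing $t$ by its coordinatewise minimum with $g-s_i$ alters $m_iX^t$ only by isomorphisms $\cdot X_l$ taken in coordinates already at level $g_l$. The finitely many maps $M_c\to M_{c+t}$ with $c,c+t\preceq g$ needed here are read off $\mathcal{B}_g(M)$ with its $R$-module structure, which is computed from the given minimal free presentation; this is the computation the algorithm actually performs after building $\mathfrak{D}(\mathfrak{P})$ via Theorem~\ref{theo:main}(1).

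For (ii): $g$ is obtained from the Betti numbers (Proposition~\ref{prop:ezra}), the polynomial $H_M(X)_{\preceq g}$ has finitely many terms, and its Hilbert partitions form a finite set --- each $Q[a^i,b^i]$ is supported in the box $[0,g]$, there are finitely many subintervals of $[0,g]$, and the number of summands is bounded by the number of monomials of $H_M(X)_{\preceq g}$ counted with multiplicity. Hence the algorithm enumerates all $\mathfrak{P}$, runs the finite test of~(i) on each, and returns the maximum of $\min\{\rho(b^i):i\}$ over the surviving ones, which equals $\Stdepth M$. Termination follows from the two finiteness statements just made.

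The step I expect to be the main obstacle is the third part of~(i): showing that the Stanley test may be confined to degrees $\preceq g$. The delicate point is that $M_{s_i+t}\ne0$ does \emph{not} imply $m_iX^t\ne0$ --- the decomposition may straddle a ``hole'' of $M$ --- so the test genuinely requires the structure maps of $\mathcal{B}_g(M)$ and not merely its Hilbert function, and it is the interaction of this with positive $g$-determinacy that bounds the range of $t$ one must inspect. Once this is in place, the reduction of the $m_i$ to a single choice, the automatic directness, and the finiteness of the set of Hilbert partitions are routine.
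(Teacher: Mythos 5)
Your proposal is correct and rests on the same key observation as the paper's proof: under $\dim_K M_a\le1$, each $m_i$ is unique up to a scalar (so $\Ann m_i=\Ann M_{s_i}$) and the linear-independence clause of Proposition \ref{hilbert:stanley}(2) is automatic once $R_i\cap\Ann m_i=0$, since the Hilbert decomposition forces each degree $a\preceq g$ to be hit by at most one slot $(i,t)$. The paper phrases this by showing directly that $\sum_i m_iR_i$ is a direct sum equal to $M$ rather than routing through Proposition \ref{hilbert:stanley}(2), and it does not spell out your step~(iii) (restricting the annihilator test to degrees $\preceq g$ via positive $g$-determinacy), but that is a correct computational refinement consistent with, not divergent from, the paper's argument.
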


\begin{algorithm}[H]
\caption{Computing Stanley depth in case $\dim_K M_a \le 1$ for all $a\in \ZZ^n$}\label{algorithm}
(We assume that all $\beta_{0,a}=0$ unless $a \in \mathds{N}^n$)\;
Compute $g\in \ZZ^n$ such that $M$ is positively $g$-determined\;
Compute $H_M(X)_{\preceq g}$\;
\nl $j=n$\;
\nl \While {$j>0$} {
Compute $\mathrm{P}_j=\{\mathfrak{P}|\ \mathfrak{P} \text{ is Hilbert partition of } H_M(X)_{\preceq g}, \Hdepth \mathfrak{P}=j\}$\;
\ForAll {$\mathfrak{P}\in\mathrm{P}_j$}{
Set $\mathfrak{D}(\mathfrak{P}): M\iso\Dirsum_{i\in I} R_i(-s_i)$ be the induced Hilbert decomposition\;
$StanleyDecomposition=true$\;
\While{$i \in I$}{
\nl\If {$R_i\bigcap\Ann M_{s_i}\neq 0$}{
$StanleyDecomposition=false$;
}
}
\If {$StanleyDecomposition=true$}{
\nl \Return {j}\;
}
}

\nl $j=j-1$\;
}
\Return {0}\;
\end{algorithm}

\begin{proof} First we explain the algorithm. In line $\bf{1}$ the variable $j$ is initialized with the maximum possible value for the Stanley depth. In the loop starting at line $\bf{2}$ we are searching for a Stanley decomposition of depth $j$ which is induced by a Hilbert decomposition. If one of these is found then we return the value $j$ at line  $\bf{4}$ and we finish the search. If none is found then the value of $j$ is decreased at line $\bf{5}$.

The only fact to prove is that the condition at line $\bf{3}$ assures that $\mathfrak{D}(\mathfrak{P})$ is inducing a Stanley decomposition. Assume that for all $i\in I$ we have that $R_i\bigcap\Ann M_{s_i}= 0$. Let $0 \neq m_i\in M_{s_i}$. Since  $\dim_K M_{s_i} = 1$, we have $\Ann m_i=\Ann M_{s_i}$ and $m_iR_i=R_i(-s_i)$ as vector spaces (all the degrees are reached). Then
\[
M= \sum_{i\in I} m_i R_i
\]
as multigraded $R$-modules. The spaces $m_iR_i$ do not overlap because $\dim_K M_a \le 1$, so the sum is actually direct. Since each summand is a Stanley space, we obtain a Stanley decomposition.
\end{proof}

\section{Some applications}\label{Applications}

As shown in the previous sections, both the Hilbert depth and the Stanley depth of a finitely generated multigraded $R$-module $M$
can be computed by considering Hilbert partitions of the polynomial $H_M(X)_{\preceq g}$.
Note that these invariants cannot be easily computed in practice, since the number of possible partitions is huge (even in very simple cases, see e.g.~Example \ref{example1}).
In this section we will show that the methods introduced so far allow us however to deduce some simple statements.
For simplicity we shall make the same assumptions as in Section \ref{AlgHilb}.
\medskip

The following proposition was proved in \cite[Lemma 3.6]{HVZ} for ideals. Now we can state and prove it for Stanley depth of modules.

\begin{propo}Let $M$ be a finitely generated multigraded $R$-module.  Let $R'=R\otimes_{K} K[X_{n+1},\ldots,X_{n+m}]$ be the polynomial ring in $n+m$ variables and $M'=M\otimes_{K} K[X_{n+1},\ldots,X_{n+m}]$ the module obtained from $M$ by scalar extension. Then
\begin{enumerate}
\item $\depth_{R'}M'=\depth_{R}M+m$;
\item $\Hdepth_{R'}M'=\Hdepth_{R}M+m$;
\item $\Stdepth_{R'}M'=\Stdepth_{R}M+m$.
\end{enumerate}
\end{propo}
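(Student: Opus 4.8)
The plan is to treat the three statements as a package, since parts (2) and (3) will follow essentially the same mechanism once one understands how Hilbert partitions and Stanley decompositions behave under scalar extension by one new variable. It suffices to prove the case $m=1$ and then iterate; so write $R'=R[X_{n+1}]$, $M'=M\otimes_K K[X_{n+1}]$. The first observation is that, as a multigraded $K$-vector space, $M'=\bigoplus_{k\ge 0} M(-k e_{n+1})$ (the copy of $M$ shifted so that its degrees carry $X_{n+1}^k$ in the last slot), and this is precisely the isomorphism $R'=R\otimes_K K[X_{n+1}]$ at the level of the defining module. Part (1) is classical (depth is additive under polynomial extension — one sees it immediately from a free resolution of $M$ tensored with $K[X_{n+1}]$, or from the Auslander–Buchsbaum formula), so the real content is (2) and (3).

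For (2): given a Hilbert decomposition $\mathfrak{D}: M\iso\bigoplus_{i\in I} K[Z_i](-s_i)$ of $M$ with $\min_i|Z_i|=\Hdepth\mathfrak{D}$, I would produce the Hilbert decomposition $M'\iso\bigoplus_{i\in I} K[Z_i\cup\{X_{n+1}\}](-s_i)$ of $M'$ (viewing $s_i\in\ZZ^{n+1}$ with last coordinate $0$); this is a valid Hilbert decomposition because tensoring each summand $K[Z_i](-s_i)$ with $K[X_{n+1}]$ gives exactly $K[Z_i\cup\{X_{n+1}\}](-s_i)$ as a multigraded vector space, and direct sums are preserved by the exact functor $-\otimes_K K[X_{n+1}]$. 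Hence $\Hdepth_{R'}M'\ge\Hdepth_R M+1$. For the reverse inequality I would invoke part (1) together with the general bound $\Hdepth\le\depth$ applied in the right direction: more precisely, the cleanest route is to show directly that \emph{every} Hilbert decomposition of $M'$ restricts to one of $M$ with one fewer variable. Take a Hilbert decomposition of $M'$; using Theorem \ref{theo:main} one may assume (passing to a $\mathfrak{D}(\mathfrak{P})$ of at least the same depth) that it comes from a Hilbert partition $\mathfrak{P}$ of $H_{M'}(X)_{\preceq g'}$ where one may choose $g'=(g,c)$ with $c$ large (the Betti numbers of $M'$ in the $X_{n+1}$-direction all sit in degree $0$, so in fact $g'=(g,0)$ works for $\beta_0,\beta_1$ — this is where Proposition \ref{prop:ezra} is used). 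Then I would argue that in any Hilbert partition of $H_{M'}(X)_{\preceq g'}=H_M(X)_{\preceq g}\cdot(1+X_{n+1}+\cdots+X_{n+1}^{c})$ the intervals can be normalized so that each $b^i$ has last coordinate $=g'_{n+1}$ (i.e.\ $X_{n+1}\in Z_{b^i}$ always): collapsing the $X_{n+1}$-direction by deleting it from every interval yields a Hilbert partition of $H_M(X)_{\preceq g}$ with $\rho$-values each dropped by exactly $1$, which by Corollary \ref{cor:hdepth} gives $\Hdepth_R M\ge\Hdepth_{R'}M'-1$.

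For (3): the construction in the forward direction is identical — a Stanley decomposition $M=\bigoplus_i m_i K[Z_i]$ yields $M'=\bigoplus_i m_i K[Z_i\cup\{X_{n+1}\}]$, and one checks $\Ann_{R'}m_i\cap K[Z_i\cup\{X_{n+1}\}]=0$ because $X_{n+1}$ is a nonzerodivisor on $M'$ and $K[Z_i]\cap\Ann_R m_i=0$; the sum is direct because the original one is and the $X_{n+1}$-gradings separate the new pieces. Hence $\Stdepth_{R'}M'\ge\Stdepth_R M+1$. For the reverse inequality I would run the same collapsing argument as in (2), but now using Theorem \ref{theo:section4} and Proposition \ref{hilbert:stanley}: a Stanley decomposition of $M'$ produces (by Theorem \ref{theo:section4}) one of the form $\overline{\mathfrak{D}(\mathfrak{P})}$ of depth $\ge\Stdepth_{R'}M'$ with $\mathfrak{P}$ a Hilbert partition of $H_{M'}(X)_{\preceq g'}$; normalizing the intervals of $\mathfrak{P}$ so that $X_{n+1}\in Z_{b^i}$ for all $i$ and deleting the $X_{n+1}$-direction, I would check that the restricted data $(m_i,K[Z_{b^i}\setminus\{X_{n+1}\}])$ still satisfies condition (2) of Proposition \ref{hilbert:stanley} over $R$ — the nondegeneracy of the linear relations descends because setting $X_{n+1}=$ (anything) in a relation over $R'$ gives a relation over $R$, and $\Ann m_i$ behaves well since $M=M'/X_{n+1}M'$ up to shift. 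This yields a Stanley decomposition of $M$ of depth $\ge\Stdepth_{R'}M'-1$.

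The main obstacle, and the step deserving the most care, is the "normalization" claim used in both reverse inequalities: that in a Hilbert partition of the product polynomial $H_M(X)_{\preceq g}\cdot(1+X_{n+1}+\cdots+X_{n+1}^{c})$ one may assume without loss of generality (without decreasing any $\rho(b^i)$) that every interval $[a^i,b^i]$ is "full" in the $X_{n+1}$-direction, i.e.\ $b^i_{n+1}=c$. This is intuitively clear — an interval that does not reach the top in the last coordinate can be glued with its translates up the $X_{n+1}$-axis, and gluing only increases $\rho$ — but making it rigorous requires a small combinatorial lemma about rearranging partitions of such "cylinder" polynomials, and one must be careful that the gluing is compatible with the chosen elements $m_i$ in the Stanley-depth case (this is exactly where $M$ being positively $g$-determined, hence $M'$ positively $(g,0)$-determined in the relevant Betti degrees, guarantees the multiplication maps $\cdot X_{n+1}$ are isomorphisms and the gluing is legitimate). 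Once this lemma is in place, all six inequalities are short.
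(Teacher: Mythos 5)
You noticed the decisive fact — that $g'=(g,0,\ldots,0)$ already makes $M'$ positively $g'$-determined, by Proposition \ref{prop:ezra} — but then you do not use it, and this costs you the proof. With $g'=(g,0,\ldots,0)$ the defining interval $[0,g']\subset\ZZ^{n+m}$ forces the last $m$ coordinates of every admissible degree to be $0$, so $H_{M'}(X)_{\preceq g'}=H_M(X)_{\preceq g}$ \emph{on the nose}. Consequently a Hilbert partition of the one polynomial \emph{is} a Hilbert partition of the other, and for every interval $[a^i,b^i]$ occurring one automatically has $b^i_j=0=g'_j$ for all $j>n$, hence $X_j\in Z_{b^i}$; therefore $\rho'(b^i)=\rho(b^i)+m$ term by term, and both inequalities in (2) follow at once from Theorem \ref{theo:main} and Corollary \ref{cor:hdepth}. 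Similarly, the system of linear relations tested in Proposition \ref{hilbert:stanley}(2) for $M'$ is literally the same system as for $M$ (all the $s_i$, $t_{i_j}$ involved lie in $\{a: a_j=0,\ j>n\}$), so a given partition induces a Stanley decomposition of $M$ if and only if it induces one of $M'$, settling (3) via Corollary \ref{coro:last1}. This is the paper's route.

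Instead you pass to $g'=(g,c)$ with $c$ large, write $H_{M'}(X)_{\preceq g'}$ as the product with $1+X_{n+1}+\cdots+X_{n+1}^c$, and rest the reverse inequalities on a ``normalization'' step: in any Hilbert partition of this cylinder polynomial one may allegedly rearrange so that every $b^i$ has last coordinate $c$, without lowering any $\rho(b^i)$, and then delete the $X_{n+1}$-direction. You flag this yourself as the step needing care, and indeed it is a genuine gap: it is not clear that the intervals of an arbitrary Hilbert partition of the cylinder can be glued ``up the $X_{n+1}$-axis'' — an interval $[(a,0),(b,0)]$ has no reason to be adjacent to a compatible translate, and the rearrangement you gesture at is a nontrivial combinatorial claim, not proved and not reducible to what is in the paper. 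In the Stanley case the situation is worse, because you also need the chosen elements $m_i$ to be compatible with the gluing, and you only assert that the nondegeneracy ``descends.'' The moral is that no such lemma is needed at all once one takes $g'=(g,0,\ldots,0)$; the normalization problem you are trying to solve is an artifact of choosing $c>0$. Also, the aside about invoking a bound $\Hdepth\le\depth$ is misdirected (in general the inequality goes the other way), though you do not ultimately rely on it.
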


\begin{proof} The statement about $\depth$ is clear since $X_{n+1},\ldots,X_{n+m}$ is a regular sequence for $M'$. Assume that the module $M$ is positively $g$-determined and set
$g'=(g,0,\ldots,0)\in \ZZ^{n+m}$. Since the multiplication map $\cdot X_i : M_{a} \longrightarrow M_{a+e_i}$ is an isomorphism whenever $i \ge n+1$, we deduce that $M'$ is positively $g'$-determined.
It follows
$$H_M(X)_{\preceq g}=H_{M'}(X)_{\preceq g'}=:P(X),$$
from which we  deduce the statement about $\Hdepth$ using Theorem \ref{theo:main} and Corollary \ref{cor:hdepth}. By Corollary \ref{coro:last1} and Proposition \ref{hilbert:stanley} it is clear that a Hilbert partition of $P(X)$ is inducing a Stanley decomposition for $M$ if and only if it is inducing a Stanley decomposition for $M'$, and the last statement follows.
\end{proof}

In the same fashion as above, the following proposition---shown in \cite[Corollary 3.2]{N} and \cite[Lemma 2.2]{C} for the case of Stanley depth of ideals---can be also extended to modules.

\begin{propo}\label{Prop:apl2} Let $R'=R\otimes_{K} K[X_{n+1},\ldots,X_{n+m}]$ be the polynomial ring in $n+m$ variables, and let $M'$ be a finitely generated multigraded $R'$-module. We consider
$\phi:R'\rightarrow R$, $\phi(X_i)=X_i$ for $i\le n$ and $\phi(X_i)=1$ for $n< i$.
Let $M$ be the $R$-module obtained from $M'$ via $\phi$. Then
\begin{enumerate}
\item $\depth_{R'}M'\le\depth_{R}M+m$;
\item $\Hdepth_{R'}M'\le\Hdepth_{R}M+m$;
\item $\Stdepth_{R'}M'\le\Stdepth_{R}M+m$.
\end{enumerate}
\end{propo}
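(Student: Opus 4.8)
The plan is to reduce the three inequalities to a single combinatorial observation about Hilbert partitions, exactly as in the previous proposition, but this time working in the opposite direction of the scalar extension. The statement about $\depth$ is the classical fact that specializing a regular sequence can only decrease depth by at most its length (or, more precisely, $\depth_R M \ge \depth_{R'}M' - m$ because $X_{n+1}-1,\ldots,X_{n+m}-1$ is, after a change of coordinates, part of a system that realizes $M$ as a quotient of $M'$); I would simply cite this. The content is in parts (2) and (3).

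First I would set up the multigrading carefully. Since $M'$ is positively $g'$-determined for some $g'=(g'_1,\ldots,g'_{n+m})\in\NN^{n+m}$, and the functor $\phi$ collapses the last $m$ coordinates, I want to compare $H_{M'}(X)_{\preceq g'}$ with $H_M(X)_{\preceq g}$ for a suitable $g\in\NN^n$. The natural choice is $g=(g'_1,\ldots,g'_n)$, but one must check that $M$ is positively $g$-determined; this follows because the Betti numbers of $M$ are images under $\phi$ of those of $M'$, so their supports project into $[0,g]$, and then Proposition~\ref{prop:ezra} applies. The Hilbert series relation is then $H_M(X)_{\preceq g} = \big(\phi_*H_{M'}(X)_{\preceq g'}\big)$ where $\phi_*$ substitutes $X_i\mapsto 1$ for $i>n$; concretely, the coefficient of $X^a$ in $H_M(X)_{\preceq g}$ is $\sum_{a'} H(M',a')$ over $a'$ with first $n$ coordinates $a$ and last $m$ coordinates in $[0,(g'_{n+1},\ldots,g'_{n+m})]$.

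Next, the key step: given a Hilbert partition $\mathfrak{P}'$ of $H_{M'}(X)_{\preceq g'}$ achieving $\Hdepth\mathfrak{D}(\mathfrak{P}')=\Hdepth_{R'}M'$, I would push it forward through $\phi_*$ to produce a Hilbert partition of $H_M(X)_{\preceq g}$. The point is that $\phi_*$ sends the polynomial $Q[a',b'](X)$ induced by an interval in $\ZZ^{n+m}$ to $Q[\bar a',\bar b'](X)$ times an extra factor $\prod_{j>n}(\text{length in coordinate }j)$, so to get a clean partition one should instead refine: decompose each $Q[a',b']$ as a sum of the $Q$ over the "fibre directions" first, as in Lemma~\ref{lemma:interval}, so that each interval in the refined partition is a product of an interval in the first $n$ coordinates with a single point in the last $m$. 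Then $\phi_*$ of such a refined interval is exactly $Q[\bar a',\bar b'](X)$, and summing gives a genuine Hilbert partition $\mathfrak{P}$ of $H_M(X)_{\preceq g}$. Crucially, for each such refined piece, $|Z_{b'}|$ counts the first-$n$-coordinate directions that are "free" plus at most $m$ from the last coordinates, so $\rho(\bar b')\ge\rho(b') - m$; taking the minimum over all pieces gives $\Hdepth\mathfrak{D}(\mathfrak{P})\ge\Hdepth\mathfrak{D}(\mathfrak{P}')-m$, which with Corollary~\ref{cor:hdepth} yields (2). For (3) I would run the same argument but track the module elements: starting from a Stanley decomposition $\mathfrak{F}'$ of $M'$ realizing $\Stdepth_{R'}M'$, applying $\phi$ to each Stanley space $m'K[Z']$ gives a candidate $\phi(m')K[\overline{Z'}]$; using Proposition~\ref{hilbert:stanley} together with Theorem~\ref{theo:section4} one checks the resulting Hilbert partition of $H_M(X)_{\preceq g}$ still induces a Stanley decomposition, and the depth drops by at most $m$ for the same counting reason.

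The main obstacle I anticipate is the bookkeeping in the "refinement then pushforward" step: one must verify that $\phi_*$ applied to the refined partition of $H_{M'}(X)_{\preceq g'}$ really lands on $H_M(X)_{\preceq g}$ with no cancellation and no leftover multiplicities — i.e.\ that the fibrewise decomposition is compatible with the substitution $X_i\mapsto 1$ — and, for part (3), that the annihilator conditions $R_i\cap\Ann\phi(m_i)=0$ survive the pushforward. The latter is where positive $g$-determinedness of $M$ is used again, in the style of the argument in the proof of Theorem~\ref{theo:section4} (the fact $(\ast)$ there), to enlarge the subalgebras $\overline{Z'}$ back up to the full $Z_{b(\bar a)}$ while keeping the annihilator intersection trivial. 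Once that is in place, the inequality follows formally from Theorem~\ref{theo:main} and Corollary~\ref{coro:last1}.
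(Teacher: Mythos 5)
Your proof follows essentially the same route as the paper's: positive $g$-determinedness for a suitable $g\in\NN^n$, the substitution $X_i\mapsto 1$ for $i>n$ applied to the truncated Hilbert polynomial, pushing a Hilbert partition of $H_{M'}(X)_{\preceq g'}$ forward to one of $H_M(X)_{\preceq g}$, the count $\rho(\bar b')\ge\rho(b')-m$, and for part (3) an appeal to Proposition~\ref{hilbert:stanley} and Theorem~\ref{theo:section4}. The only stylistic difference is your intermediate refinement of each interval $[a',b']$ into single-fibre pieces $[(a,c),(b,c)]$; the paper avoids this by directly recording that $Q[a',b'](X_1,\dots,X_n,1,\dots,1)=\bigl(\prod_{n+1\le i\le n+m}(b'_i-a'_i+1)\bigr)\,Q[a,b](X_1,\dots,X_n)$, but both devices produce the same Hilbert partition (intervals repeated with multiplicity) and hence the same conclusion.
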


\begin{proof} The statement about $\depth$ is easy since every resolution of $M'$ over $R'$ induces a free resolution of $M$ over $R$ via $\phi$, and we can then use the Auslander--Buchsbaum formula.

Further, observe that
$$
H_M(X_1,\dots,X_n)=H_{M'}(X_1,\dots,X_n,1,\ldots,1).
$$
Assume that the module $M'$ is positively $g'=(g_1,\ldots , g_n,g_{n+1}, \ldots , g_{m+n})$-determined and set
$g=(g_1,\ldots , g_n)\in \ZZ^{n}$. We deduce that $M$ is positively $g$-determined and
$$
H_M(X_1,\dots,X_n)_{\preceq g}=H_{M'}(X_1,\dots,X_n,1,\ldots,1)_{\preceq g'}.
$$
Given $a'=(a_1,\ldots , a_n,a_{n+1}, \ldots , a_{m+n}),b'=(b_1,\ldots , b_n,b_{n+1}, \ldots , b_{m+n}) \in \ZZ^{n+m}$ such that $a'\preceq b'$, we consider the polynomial induced by the interval $[a',b']$, namely
\[
Q[a',b'](X)=\sum_{a' \preceq c \preceq b' } X^{c}.
\]
It is easy to check that
\[
Q[a',b'](X_1,\dots,X_n,1,\ldots,1)=\Big(\prod_{n+1\le i \le n+m } (b_i-a_i+1)\Big)Q[a,b](X_1,\dots,X_n)
\]
for $a=(a_1,\ldots , a_{n})$ and $b=(b_1,\ldots , b_{n})$. We conclude that each Hilbert partition of $H_{M'}(X)_{\preceq g'}$ with polynomials induced by intervals of type $[a',b']$ determines a Hilbert partition of $H_{M}(X)_{\preceq g}$
with polynomials induced by intervals of type $[a,b]$, and
the statement about $\Hdepth$ is deduced by using Theorem \ref{theo:main}.

By  Proposition \ref{hilbert:stanley} it is clear that a Hilbert partition of $H_{M'}(X)_{\preceq g'}$ is inducing a Stanley decomposition for $M'$ only if it is inducing a Stanley decomposition for $M'$, since there are less linear dependencies to check. The last statement on $\Stdepth$ follows straight.
\end{proof}

\begin{ex} The inequalities in Proposition \ref{Prop:apl2} may be strict. Let $M'$ be the ideal $(XY,XZ)$ in $R'=K[X,Y,Z]$. It is clear that $\depth_{R'}M'<3,\ \Hdepth_{R'}M'<3,\ \Stdepth_{R'}M'<3$ since $M'$ is not principal. Set $n=1$, $m=2$ and $\phi: K[X,Y,Z]\rightarrow K[X]$. Then $M$ is the principal ideal $(X)$ in $R=K[X]$ and  $\depth_{R}M=\Hdepth_{R}M=\Stdepth_{R}M=1$.
\end{ex}

\section*{acknowledgement}
The authors would like to thank Winfried Bruns and Marius Vladoiu for their useful comments.

The first author was partially supported by CNCSIS grant TE-46 nr. 83/2010, and
the second author was partially supported by the Spanish Government through Ministerio de Educaci\'on y Ciencia (MEC), grant MTM2007-64704, and Ministerio de Econom\'ia y Competitividad, grant MTM2012--36917--C03--03, in cooperation with the European Union in the framework of the founds ``FEDER'', during the preparation of this work.

\vspace{\baselineskip}

\end{document}